\newtheorem{theorem}{Theorem}[section]
\newtheorem*{theorem*}{Theorem}
\newtheorem{lemma}[theorem]{Lemma}
\newtheorem{definition}[theorem]{Definition}
\DeclareSymbolFont{cyrletters}{OT2}{wncyr}{m}{n}
\DeclareMathSymbol{\Sha}{\mathalpha}{cyrletters}{"58}
\theoremstyle{remark}
\newtheorem{remark}[theorem]{Remark}
\begin{document}
\raggedbottom

\title{Applications of Envelopes}
	\date{\today}
\author{Kelly Bickel$^\dagger$, Pamela Gorkin$^\ddagger$, {\protect \and} Trung Tran}
\thanks{$\dagger$ Research supported in part by National Science Foundation 
DMS grant \#1448846.}
\thanks{$\ddagger$ Research supported in part by Simons Foundation Grant \#243653. }
\address{Kelly Bickel, Department of Mathematics, Bucknell University, 380 Olin Science Building, Lewisburg, PA 17837, USA.}
\email{kelly.bickel@bucknell.edu}
\address{Pamela Gorkin, Department of Mathematics, Bucknell University, 380 Olin Science Building, Lewisburg, PA 17837, USA.}
\email{pgorkin@bucknell.edu}
\address{Trung Tran, Department of Mathematics, Bucknell University, 380 Olin Science Building, Lewisburg, PA 17837, USA.}
\email{tbt004@bucknell.edu }
\begin{abstract} Intuitively, an envelope of a family of curves is a curve that is
tangent to a member of the family at each point. Here we use envelopes of families of circles to 
study objects from matrix theory and hyperbolic geometry. First we explore relationships between
numerical ranges of $2\times 2$ matrices and families of circles to study the elliptical range theorem. Then
we deduce a relationship between envelopes and the boundaries of families of intersecting circles 
and use it to find the boundaries of various families of pseudohyperbolic disks.
\end{abstract}

\keywords{Envelopes, Numerical Range, Elliptical Range Theorem, Pseudohyperbolic Disks}
\subjclass[2010]{Primary 53A04; Secondary 47A12, 30F45}
\maketitle

\section{Introduction}
\subsection*{Overview} Let $\mathcal{F}$ denote the family of curves in the $xy$-plane satisfying $F(x,y,t)=0$, for some function $F$. 
A simple but powerful object in the study of such a family of curves is the associated envelope.
Intuitively, an {\it envelope} of $\mathcal{F}$ is a curve that, at each of its points, is tangent to a member of the family.
For example, the picture below gives a family of ellipses whose envelope is an astroid. 

\begin{figure}[H]
\label{fig1}
	\centering
	\includegraphics[width=4cm]{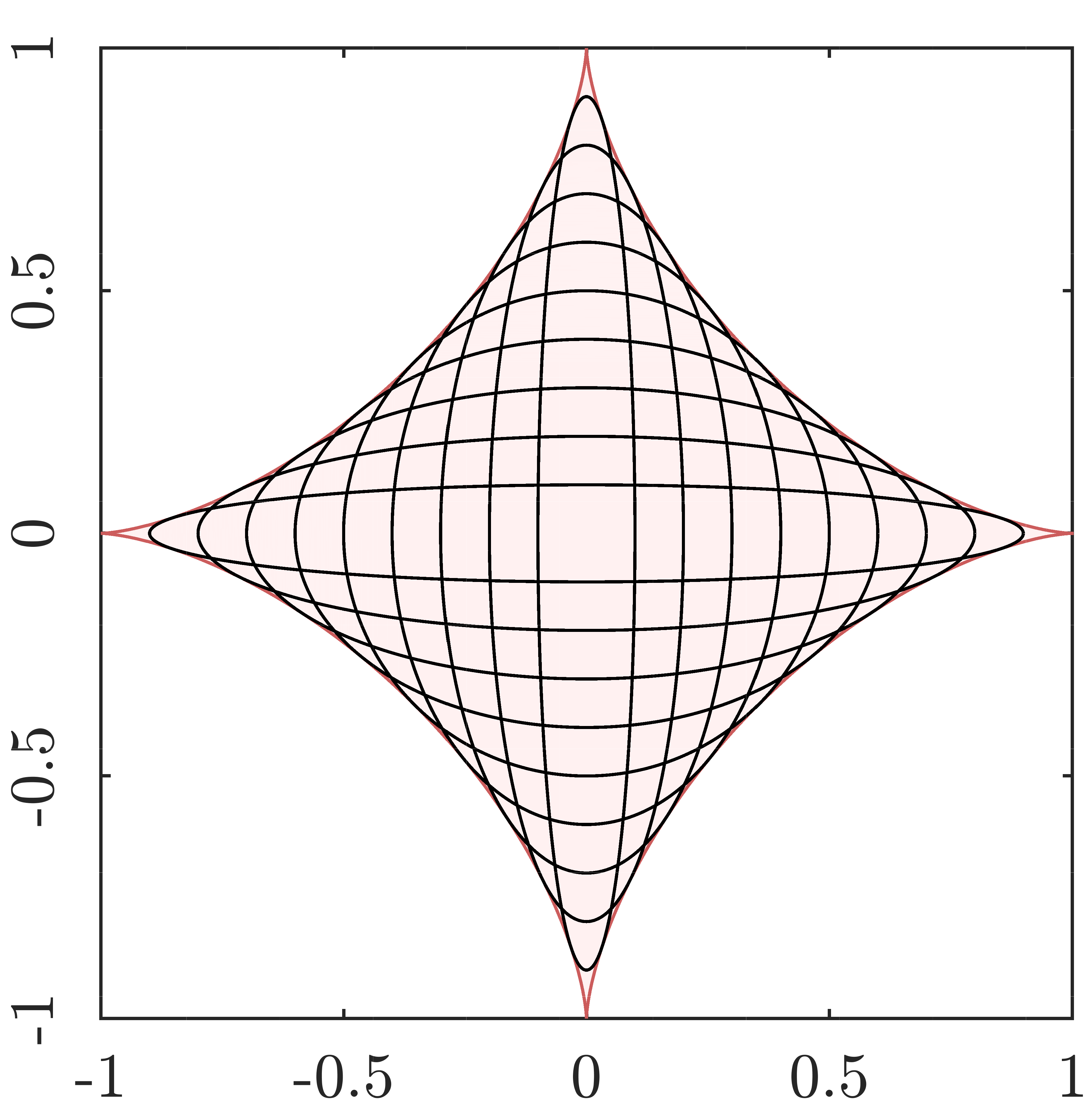}
\caption{\small The ellipses $\tfrac{x^2}{t^2} + \tfrac{y^2}{(t-1)^2}=1$ for $t\in(0,1)$ with envelope $x^{\frac{2}{3}} +y^{\frac{2}{3}}=1$. }
\end{figure}

Envelopes are of practical value and are used in robotics and gear construction, \cite{PP00}. In addition, envelope theorems are used in economics: Economists, including Viner, Harrod, and Schneider, used envelopes to study the connection between short-run and long-run cost curves. For the story of the Viner-Wong theorem, see \cite{M2010}.

In this paper, we illustrate the utility of envelopes. In particular, if one can rewrite a boundary problem in terms of a family of curves, then 
envelope techniques may yield new insights or simpler proofs. Here, we use envelopes to study and provide proofs of both known and new
results from matrix theory and hyperbolic geometry. First, we build off of an argument of Donoghue \cite{don57} to prove the elliptical range
theorem. This proof gives insights about how the numerical range of a $2 \times 2$ matrix comes from an interesting family of circles.

Then we consider general families of intersecting circles and use envelopes to characterize their boundaries. As an application, we study the boundaries
of families of  pseudohyperbolic disks. We first provide a new proof of a well-known result 
about the boundary of a family of pseudohyperbolic disks whose centers lie along a straight line, 
see for example \cite{MR2015, N12}.  This result has numerous applications to function theory on the unit disk. For example, it can be used to analyze Blaschke products whose zeros lie in this family of pseudohyperbolic disks as well as simplify various interpolation problems involving points lying inside this family of disks or on the boundary. For details see Remark \ref{rem:app1}. We then handle a new situation concerning pseudohyperbolic
disks with centers on a horocycle. This result also has applications to interpolation, which are discussed in Remark \ref{rem:app2}.

Before considering these problems further, we need a precise notion of an envelope.

\subsection*{Envelopes} There are several competing ways, to define the envelope of a family $\mathcal{F}$, where $\mathcal{F}$ is a family of curves given by $F(x, y, t) = 0$, for some  continuously differentiable $F$. In what follows, let $\Gamma_{t}$ denote the curve of $(x,y)$ points satisfying $F(x, y, t) = 0$ for $t$ fixed. Perhaps the most common definition is the following:

\begin{definition} A \emph{geometric envelope}  $E_1$  of $\mathcal{F}$ is a curve so that each point on $E_1$ is a point of tangency to some $\Gamma_{t}$ (and often, each $\Gamma_t$ is touched by $E_1$). 
\end{definition}

The geometric envelope is intuitive, but it is difficult to compute directly. Instead, the following definitions are often used in applications:

\begin{definition} The \emph{limiting-position envelope} $E_2$  of $\mathcal{F}$ is the set of points obtained as limits of intersections of nearby $\Gamma_t;$ a point $(x,y) \in E_2$ if there are sequences  $(t_n)$ and  $(\tilde{t}_n)$ converging to some $t$, so that $(x,y)$ is a limit of intersection points of 
$\Gamma_{t_n}$ and $\Gamma_{\tilde{t}_n}$.
\end{definition}

\begin{definition} The \emph{discriminant envelope} $E_3$ of $\mathcal{F}$ is the set of points $(x,y)$ so that there is a value of $t$ with both 
$F(x,y,t ) = 0$ and $F_{t}(x,y, t)=0.$
\end{definition}

In general, these definitions can give different sets of points. 
Nevertheless, it is known that $E_1$ and $E_2$ are both contained in $E_3$; see \cite[Propositions $1$ and $2$]{bruce}. Similarly, if each curve in $E_3$ can be written  as $(x(t), y(t))$ and both 
\[ F_x^2(x,y,t) + F_y^2(x,y,t) \ne 0 \ \ \text{and} \ \ x'(t)^2 +y'(t)^2 \ne 0,\]
 then $E_1 =E_3,$ see \cite[pp. 173]{c88}. One can often find $E_3$ by setting $F(x,y,t) = 0$ and  $F_{t}(x,y,t)=0$ and then eliminating $t$; this is called the \emph{envelope algorithm}. For more information about these and additional definitions, see \cite{bruce, c88, k07, rutter}. For a variety of envelope applications and related results, see  \cite{cap02, gil12, km96, ma13, p47, tak17} and the references therein.
 
Often, the boundary (or a piece of the boundary) of $\mathcal{F}$ will correspond to an envelope of $\mathcal{F}$.  For example, in  Figure \ref{fig1} the astroid is exactly the boundary of the family of ellipses and, except at the corners $(\pm 1, 0), (0, \pm 1)$, it agrees with the discriminant envelope. Indeed, \cite{k07} notes that if the boundary can be smoothly parameterized by $t$, then it must belong to the geometric envelope.  However, without apriori knowledge of the boundary, such a condition is difficult to check. The numerous complications that can arise for general $\mathcal{F}$ make a very general theorem unlikely.

\subsection*{Main Results} In this paper, we consider two distinct applications of envelopes.

\subsubsection*{Application 1: Elliptical range theorem}
Given an $n \times n$ complex matrix $B$, the numerical range of $B$ is the set $W(B)$ defined by
\[W(B) = \{\langle Bx, x\rangle: \|x\| = 1, x \in \mathbb{C}^n\}.\] 
While the numerical range includes the spectrum of $B$, it typically encodes additional information. 
 One particularly deep result about numerical ranges, called the Toeplitz-Hausdorff theorem, states that they are always convex. Most proofs of this result (though not all; see, for example, \cite{d71} for a proof that  does not rely on Theorem~\ref{thm:ERT})
 reduce the problem to $2\times 2$ matrices, see \cite[pp.4]{gr97}, and then use the following fact:

\begin{theorem}[The elliptical range theorem] \label{thm:ERT}
	Let $A$ be a $2 \times 2$ matrix with eigenvalues $a$ and $b$. Then the numerical range of $A$ is an elliptical disk with foci at $a$ and $b$ and minor axis of length $\left(tr(A^\star A) - |a|^2 - |b|^2\right)^{1/2}$.
\end{theorem}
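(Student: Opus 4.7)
The plan is to follow Donoghue's strategy by realizing the numerical range as a union of circles and identifying its boundary via the envelope algorithm. First, I would use unitary invariance of $W$ to replace $A$ by its Schur triangularization $T = \begin{pmatrix} a & \gamma \\ 0 & b\end{pmatrix}$; since the Frobenius norm is unitarily invariant, $|\gamma|^2 = \text{tr}(T^\star T) - |a|^2 - |b|^2 = \text{tr}(A^\star A) - |a|^2 - |b|^2$. Parameterizing unit vectors in $\mathbb{C}^2$ as $x_{\theta,\phi} = (\cos\theta,\, e^{i\phi}\sin\theta)$, a direct computation gives
\[
\langle T x_{\theta,\phi}, x_{\theta,\phi}\rangle = a\cos^2\theta + b\sin^2\theta + \gamma\sin\theta\cos\theta\, e^{-i\phi}.
\]
For each fixed $\theta \in [0,\pi/2]$, as $\phi$ varies these values sweep out a circle $C_\theta$ with center $a\cos^2\theta + b\sin^2\theta$ (lying on the segment from $b$ to $a$) and radius $|\gamma|\sin\theta\cos\theta$. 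Thus $W(A) = \bigcup_\theta C_\theta$, a family perfectly suited to envelope analysis.

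Next I would normalize via a map $z \mapsto \alpha z + \beta$ with $|\alpha| = 1$ chosen so that after the transformation $a = d$, $b = -d$ with $d = |a-b|/2$ lie on the real axis and $\gamma > 0$. Under this map $W(A)$ translates and rotates rigidly, the eigenvalues transform the same way, and the quantity $\text{tr}(A^\star A) - |a|^2 - |b|^2$ is invariant, so the claimed ellipse transforms compatibly and it suffices to prove the theorem in the normalized setting. Writing $u = \cos^2\theta \in [0,1]$, the family becomes
\[
F(x,y,u) = (x - (2u-1)d)^2 + y^2 - \gamma^2 u(1-u) = 0.
\]
Solving $F_u = 0$ for $u$ and substituting back into $F = 0$ (the envelope algorithm) should produce
\[
\frac{x^2}{d^2 + \gamma^2/4} + \frac{y^2}{\gamma^2/4} = 1,
\]
an ellipse with foci at $(\pm d, 0)$ (the normalized eigenvalues) and minor axis of length $\gamma = \sqrt{\text{tr}(A^\star A) - |a|^2 - |b|^2}$, as claimed.

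The main obstacle is that the envelope computation by itself only yields a candidate boundary curve; one must still verify that $\bigcup_\theta C_\theta$ actually equals the closed elliptical disk bounded by this curve. I would handle this by viewing $F(x_0, y_0, u) = 0$ as a quadratic in $v = 2u - 1$: a short manipulation of its discriminant should show that real solutions exist precisely when $(x_0, y_0)$ lies in the closed elliptical disk. To ensure the solutions also satisfy $v \in [-1,1]$, I would observe that evaluating the quadratic at $v = \pm 1$ gives $(d \mp x_0)^2 + y_0^2 \geq 0$, combine this with a continuity argument starting from the interior point $(x_0,y_0) = (0,0)$ (where $v^2 = (\gamma^2/4)/(d^2+\gamma^2/4) < 1$), and conclude that for every point of the closed ellipse there is an admissible $\theta$. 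This yields both inclusions between $W(A)$ and the closed elliptical disk, and undoing the normalization gives the general statement.
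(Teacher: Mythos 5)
Your proposal is correct and follows the same overall architecture as the paper's proof: Schur triangularization, an affine normalization placing the eigenvalues at convenient real points, realization of $W(A)$ as a one-parameter family of circles, the envelope algorithm to produce the candidate ellipse, and then a separate verification that the union of circles is exactly the closed elliptical disk. Where you genuinely diverge is in that last verification, which is the technical heart of the matter. The paper proves the containment of each circle in the closed ellipse by a calculus optimization over $\theta$ and $t$ (Lemma~\ref{lemma:forward}) and the reverse containment by an intermediate value argument (Lemma~\ref{lemma:backward}); it also treats the repeated-eigenvalue case separately via Lemma~\ref{NumOfR}. You instead observe that $(x_0,y_0)$ lies on some circle of the family if and only if the quadratic
\[
Q(v) \;=\; \bigl(d^2+\tfrac{\gamma^2}{4}\bigr)v^2 \;-\; 2x_0 d\, v \;+\; \bigl(x_0^2+y_0^2-\tfrac{\gamma^2}{4}\bigr)
\]
has a root $v\in[-1,1]$, and that its discriminant equals $\gamma^2\bigl(d^2+\tfrac{\gamma^2}{4}\bigr)\bigl(1-\tfrac{x_0^2}{d^2+\gamma^2/4}-\tfrac{y_0^2}{\gamma^2/4}\bigr)$; this single identity delivers both inclusions at once and absorbs the repeated-eigenvalue case ($d=0$) into the same computation. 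That is a real simplification over the paper's Lemmas~\ref{lemma:forward} and~\ref{lemma:backward}, at the cost of being tailored to a family that is quadratic in the parameter, whereas the paper's two-lemma pattern is closer in spirit to the general envelope machinery developed later. One small point to tighten: the confinement of the roots to $[-1,1]$ does not actually need a continuity argument. Since $Q(\pm 1)=(x_0\mp d)^2+y_0^2\ge 0$ and $Q$ opens upward, neither $-1$ nor $1$ lies strictly between the two roots; and the midpoint of the roots is $v^\ast = x_0 d/(d^2+\gamma^2/4)$, which lies in $[-1,1]$ whenever $|x_0|\le\sqrt{d^2+\gamma^2/4}$, i.e.\ for every point of the closed elliptical disk. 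Together these two facts force both roots into $[-1,1]$, completing your argument.
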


C. K. Li gave a simple computational proof of this in his paper \cite{ckli} and other proofs appear in \cite{gr97, hj91, fdm32}. Here, we present a proof that rests on the discriminant envelope.  We first reduce a general $A$ to $T'=\begin{bmatrix} 0 & m \\ 0& 1\end{bmatrix}$, where $m>0$ and show that $W(T')$ can be expressed as a union of circles. Then envelope techniques, paired with additional computations, allow us to characterize the set covered by this family of circles and obtain Theorem \ref{thm:ERT}. 

This proof is based on one from a paper of Donoghue  \cite{don57}, where he does a similar reduction and uses envelopes to identify the boundary of the numerical range. However in general, the envelope need not be the boundary of the region covered by the family of curves. 
Here we provide the details of the envelope algorithm and in Lemma~\ref{lemma:forward}, add the details that show that in this case, this envelope does give the boundary of the union of circles and hence, the numerical range. 

\subsubsection*{Application 2: Families of Intersecting Circles}
Let $\mathcal{F}$ denote a family of circles parameterized by $t \in [s_1, s_2]$. In particular, assume that 
there is a curve of centers $c(t) = (x_c(t), y_c(t))$ for $t\in [s_1, s_2]$ and a curve of radii $r(t)$ for $t\in [s_1, s_2]$ so that $\mathcal{F}$ is the set of curves with $F(x,y,t)=0$ for
\begin{equation} \label{eqn:circles1} F(x,y,t) = (x-x_c(t))^2 +(y-y_c(t))^2 - r(t)^2.\end{equation}
For each fixed $t$, let $\mathcal{C}_t$ be the circle defined by $F(x,y,t)=0$ and let $\mathcal{D}_t$ be the open disk whose boundary is $\mathcal{C}_t$.

If the curves $c$ of centers and $r$ of radii are sufficiently smooth and nearby circles intersect, then there is a nice relationship between
the boundary of $\cup_t \mathcal{D}_t$ and the limiting-position and discriminant envelopes $E_2$ and $E_3.$ In particular, we prove:

\begin{theorem*}[\ref{thm:circle}] Let $\mathcal{F}$ denote a family of circles parameterized by  $t \in [s_1, s_2]$ as in \eqref{eqn:circles1}. Assume $x_c, y_c, r \in C^2([s_1, s_2])$ and 
for $t\in (s_1, s_2)$, $r(t) >0$ and 
\[ r'(t)^2 < x'_c(t)^2 + y_c'(t)^2.\]
Let $\Omega = \bigcup_{t\in[s_1,s_2]} \mathcal{D}_t.$ Then $\partial \Omega \subseteq E_2 \cup \mathcal{C}_{s_1} \cup \mathcal{C}_{s_2} \subseteq E_3 \cup \mathcal{C}_{s_1} \cup \mathcal{C}_{s_2}.$ 
\end{theorem*}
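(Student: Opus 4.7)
The containment $E_2\subseteq E_3$ is the cited result from \cite{bruce}, so the first inclusion $\partial\Omega\subseteq E_2\cup\mathcal{C}_{s_1}\cup\mathcal{C}_{s_2}$ is what needs a proof.

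My first step is the boundary analysis. Fix $(x_0,y_0)\in\partial\Omega$. Since each $\mathcal{D}_t$ is open, $\Omega$ is open, so $(x_0,y_0)\notin\Omega$ and hence $F(x_0,y_0,t)\ge 0$ for every $t\in[s_1,s_2]$. On the other hand, pulling interior points of $\Omega$ toward $(x_0,y_0)$ and using compactness of $[s_1,s_2]$ with continuity of $F$, I obtain $t^*\in[s_1,s_2]$ with $F(x_0,y_0,t^*)\le 0$, hence $F(x_0,y_0,t^*)=0$. If $t^*\in\{s_1,s_2\}$, the point lies on $\mathcal{C}_{s_1}\cup\mathcal{C}_{s_2}$ and the proof is done, so assume $t^*\in(s_1,s_2)$. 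Then $g(t):=F(x_0,y_0,t)$ is a $C^2$ function, nonnegative on $[s_1,s_2]$ and vanishing at the interior point $t^*$, so $g'(t^*)=F_t(x_0,y_0,t^*)=0$. This already gives $(x_0,y_0)\in E_3$, but the theorem demands $E_2$-membership.

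To produce intersection points of nearby circles converging to $(x_0,y_0)$, I plan to apply the implicit function theorem to the symmetrized map
\[
H(x,y;h):=\left(\tfrac{1}{2}\bigl[F(x,y,t^*+h)+F(x,y,t^*-h)\bigr],\ \tfrac{1}{2h}\bigl[F(x,y,t^*+h)-F(x,y,t^*-h)\bigr]\right)
\]
for $h\neq 0$, extended continuously at $h=0$ by $H(x,y;0):=(F(x,y,t^*),F_t(x,y,t^*))$. For $h\neq 0$, $H=0$ is equivalent to $(x,y)\in\mathcal{C}_{t^*-h}\cap\mathcal{C}_{t^*+h}$, and Taylor expansion in $h$ under the $C^2$ hypothesis shows $H$ is $C^1$ on a neighborhood of $(x_0,y_0,0)$; by the previous step, $H(x_0,y_0;0)=0$. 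The $(x,y)$-Jacobian of $H$ at $(x_0,y_0,0)$ has determinant proportional to $x_c'(t^*)(y_0-y_c(t^*))-y_c'(t^*)(x_0-x_c(t^*))$. Writing $(x_0-x_c(t^*),y_0-y_c(t^*))=r(t^*)(\cos\theta,\sin\theta)$ and combining the envelope relation $x_c'(t^*)\cos\theta+y_c'(t^*)\sin\theta=-r'(t^*)$ with the Pythagorean identity makes the squared determinant equal, up to a positive factor, to $r(t^*)^2\bigl[x_c'(t^*)^2+y_c'(t^*)^2-r'(t^*)^2\bigr]$, strictly positive by hypothesis. The implicit function theorem then yields a $C^1$ curve $h\mapsto(X(h),Y(h))$ with $(X(0),Y(0))=(x_0,y_0)$ and $H(X(h),Y(h);h)=0$ for all small $h$; any sequence $h_n\downarrow 0$ exhibits $(x_0,y_0)$ as a limit of points in $\mathcal{C}_{t^*-h_n}\cap\mathcal{C}_{t^*+h_n}$, placing it in $E_2$.

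The boundary analysis and first-order vanishing step are routine; the real obstacle is the $E_2$ upgrade. The naive system $(F(\cdot,t^*+h),F(\cdot,t^*-h))=(0,0)$ has singular $(x,y)$-Jacobian at $h=0$, and the symmetric/antisymmetric decomposition combined with the $1/h$ rescaling of the antisymmetric part is what converts the singular direction into the $F_t=0$ equation; the strict intersection condition $r'(t)^2<x_c'(t)^2+y_c'(t)^2$ is then precisely what keeps the resulting Jacobian nonsingular.
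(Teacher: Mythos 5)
Your argument is correct, but it takes a genuinely different route from the paper's. Both proofs begin the same way: a boundary point is not in the open set $\Omega$ but is a limit of points of some $\mathcal{D}_{t_n}$, so by compactness and continuity it lies on some $\mathcal{C}_{t^*}$. From there the paper works circle by circle: it invokes the explicit formulas for the two intersection points of $\mathcal{C}_{t^*}$ with a nearby $\mathcal{C}_{t^*+h}$, Taylor-expands them to exhibit their limits $P_1,P_2\in E_2$, and then runs an arc-covering argument (using the diameter endpoints $Q_1,Q_2$ in the direction $(x_c',y_c')$) to show every other point of $\mathcal{C}_{t^*}$ is swallowed by a nearby open disk, so $\partial\Omega\cap\mathcal{C}_{t^*}\subseteq\{P_1,P_2\}$. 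You instead work point by point: the observation that $t\mapsto F(x_0,y_0,t)$ is nonnegative with an interior zero gives $F=F_t=0$ for free, and your symmetrized, $1/h$-rescaled system $H$ together with the implicit function theorem upgrades this $E_3$ membership to genuine $E_2$ membership; your Jacobian computation is right, and the nondegeneracy is exactly the hypothesis $r'^2<x_c'^2+y_c'^2$ (the $C^2$ assumption is what makes $H_2(x,y;h)=\int_0^1 F_t(x,y,t^*+(2u-1)h)\,du$ a $C^1$ function through $h=0$). Your route is cleaner and avoids the explicit intersection formulas entirely, but it proves only the stated containment; the paper's longer argument additionally identifies, for each interior $t$, the at-most-two candidate boundary points and their closed-form parameterization (Remark \ref{rem:formulas}), which is what the paper actually uses in the horocycle application of Section \ref{sec:horocycle}. (Those formulas are recoverable from your approach by solving $F=F_t=0$ for fixed $t$, but you would need to add that step.)
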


The proof also yields a parameterization of the points from $E_2$ that can contribute to $\partial \Omega$; this is given in Remark \ref{rem:formulas}. As corollaries, we determine the boundaries of various families of pseudohyperbolic disks. Namely, recall that the pseudohyperbolic distance between two points $z,w\in \mathbb{D}$ is
$$
d_\rho(z,w) = \left|\frac{z-w}{1-\bar{w}z}\right|.
$$
For $\beta \in \mathbb{D}$ and $r \in (0,1)$, let $D_\rho (\beta,r) = \{z \in \mathbb{C}:d_\rho(\beta,z)<r\}$ denote the pseudohyperbolic disk with center $\beta$ and radius $r$. Then $D_\rho (\beta,r)$ is also a Euclidean disk with center $c_{\rho}$ and radius $R_{\rho}$ given by
\begin{equation}\label{eqn:cR}
c_{\rho}(\beta) = \frac{(1-r^2)\beta}{1-r^2|\beta|^2} \ \ \text{ and } \ \ R_{\rho}(\beta) = \frac{r(1-|\beta|^2)}{1-r^2|\beta|^2},
\end{equation} 
see \cite[pp.~$2$]{Gar07}. In particular, families of pseudohyperbolic disks can be realized as families of Euclidean circles, which allows us to study them using Theorem \ref{thm:circle}. \\

\noindent \emph{Case $A$: Disks on a Line.} We first fix $r\in (0,1)$ and examine the family of pseudohyperbolic disks $\{ D_\rho(t,r)\}_{t \in [-1,1]}$, whose centers lie along a real line segment. Specifically, 
define the open disks
 \begin{equation} \label{eqn:disks} \mathcal{D}_1=\{z \in \mathbb{C} :\left|z+ \tfrac{1-r^2}{2r}i \right| < R_r \} \ \ \text{ and }  \ \ \mathcal{D}_2=  \{z \in \mathbb{C} :\left |z-\tfrac{1-r^2}{2r}i \right| < R_r \},\end{equation}
 where the radius $R_r=\frac{1+r^2}{2r}$. Let $\overline{\mathcal{D}_j}$ and $\partial \mathcal{D}_j$ denote the closure and boundary of each $\mathcal{D}_j$ and define the circular arcs 
 \begin{equation} \label{eqn:arcs} \mathcal{A}_1= \partial \mathcal{D}_1 \cap \overline{\mathbb{H}^+} \ \ \text{ and } \ \ \mathcal{A}_2= \partial \mathcal{D}_2 \cap \overline{\mathbb{H}^-},\end{equation}
 where $\mathbb{H}^+=\{ z \in \mathbb{C}: \Im(z) >0\}$ and $\mathbb{H}^-$ is defined analogously.
Then using the envelope algorithm and Theorem \ref{thm:circle}, we show:
\begin{theorem*}[\ref{thm:bdy}] The union $\bigcup_{t\in[-1,1]} D_\rho(t,r)$ has boundary $\mathcal{A}_1 \cup \mathcal{A}_2$.  
\end{theorem*}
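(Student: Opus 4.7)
The plan is to apply Theorem~\ref{thm:circle} directly to the family of Euclidean circles $\mathcal{C}_t = \partial D_\rho(t,r)$ for $t \in [-1,1]$. By \eqref{eqn:cR}, $D_\rho(t,r)$ is the open Euclidean disk with center $(x_c(t), 0) = \bigl(\tfrac{(1-r^2)t}{1-r^2 t^2}, 0\bigr)$ and radius $R(t) = \tfrac{r(1-t^2)}{1-r^2 t^2}$. Both $x_c$ and $R$ are $C^\infty$ on $[-1,1]$ (since $1-r^2 t^2 \ge 1-r^2 > 0$), $R > 0$ on $(-1,1)$, and a short derivative computation gives $x_c'(t)^2 - R'(t)^2 = (1-r^2)^2/(1-r^2 t^2)^2 > 0$. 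The hypotheses of Theorem~\ref{thm:circle} therefore hold, and writing $\Omega = \bigcup_{t \in [-1,1]} D_\rho(t,r)$ while observing that $\mathcal{C}_{\pm 1}$ degenerate to the single points $\{\pm 1\}$, we conclude $\partial \Omega \subseteq E_3 \cup \{-1, 1\}$.

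Next I would run the envelope algorithm on $F(x,y,t) = (x-x_c(t))^2 + y^2 - R(t)^2$: solving $F_t = 0$ for $x$ and substituting back into $F = 0$ yields, after cancellation of factors of $1 \pm r^2 t^2$, the parameterization
\[
(x(t), y(t)) = \Bigl(\tfrac{(1+r^2) t}{1 + r^2 t^2},\ \pm \tfrac{r(1-t^2)}{1 + r^2 t^2}\Bigr), \quad t \in [-1,1].
\]
An algebraic check then shows the $+$ branch satisfies $x^2 + \bigl(y+\tfrac{1-r^2}{2r}\bigr)^2 = \bigl(\tfrac{1+r^2}{2r}\bigr)^2$, so it lies on $\partial \mathcal{D}_1$; since $y \ge 0$ on the $+$ branch, the image lies in $\mathcal{A}_1$. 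Because $x(t)$ is strictly monotone on $[-1,1]$ with $x(\pm 1) = \pm 1$ and $y(\pm 1) = 0$, the $+$ branch sweeps out all of $\mathcal{A}_1$; by the symmetry $y \mapsto -y$ the $-$ branch sweeps out all of $\mathcal{A}_2$. Since $\{-1, 1\} \subset \mathcal{A}_1 \cap \mathcal{A}_2$, this gives $\partial \Omega \subseteq \mathcal{A}_1 \cup \mathcal{A}_2$.

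For the reverse inclusion I would first show that $\overline{D_\rho(t,r)}$ is internally tangent to $\partial \mathcal{D}_1$ and $\partial \mathcal{D}_2$ for every $t$: a direct distance computation gives $\bigl|x_c(t) + \tfrac{1-r^2}{2r} i\bigr| = \tfrac{1+r^2}{2r} - R(t)$, the classical criterion for internal tangency, and symmetry handles $\mathcal{D}_2$. Since the contact with each $\partial \mathcal{D}_j$ is a single point, $D_\rho(t,r) \subseteq \mathcal{D}_1 \cap \mathcal{D}_2$, and hence $\Omega \subseteq \mathcal{D}_1 \cap \mathcal{D}_2$. On the other hand, each point of $\mathcal{A}_1$ equals $(x(t^*), y(t^*))$ for some $t^*$ by the parameterization above, so it lies on $\partial D_\rho(t^*, r) \subseteq \overline{\Omega}$; combined with $\mathcal{A}_1 \cap (\mathcal{D}_1 \cap \mathcal{D}_2) = \emptyset$ (as $\mathcal{A}_1 \subseteq \partial \mathcal{D}_1$), this forces $\mathcal{A}_1 \subseteq \partial \Omega$, and similarly $\mathcal{A}_2 \subseteq \partial \Omega$. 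The main obstacles are the envelope algebra that produces the clean parameterization above and the internal-tangency identity, both of which hinge on somewhat non-obvious cancellations among factors of $(1 \pm r^2 t^2)$ and $(1 \pm r^2)$.
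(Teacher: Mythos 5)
Your proposal is correct. The first half---verifying the hypotheses of Theorem \ref{thm:circle} for the family $\{\partial D_\rho(t,r)\}$, running the envelope algorithm to get $(x(t),y(t)) = \bigl(\tfrac{(1+r^2)t}{1+r^2t^2}, \pm\tfrac{r(1-t^2)}{1+r^2t^2}\bigr)$, and identifying the two branches with $\mathcal{A}_1$ and $\mathcal{A}_2$ via the circle equation, the sign of $y$, and the monotonicity of $x(t)$---is essentially identical to the paper's argument (its Lemma \ref{lem:envelope} plus the first part of the proof of Theorem \ref{thm:bdy}), so there is nothing to add there.

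Where you genuinely diverge is the reverse inclusion $\mathcal{A}_1\cup\mathcal{A}_2\subseteq\partial\Omega$. The paper argues by contradiction: if $p\in\mathcal{A}_1$ were an interior point of $\Omega$, it walks outward along the normal to $\partial\mathcal{D}_1$ until it exits $\Omega$, producing a boundary point off $\mathcal{A}_1\cup\mathcal{A}_2$ and contradicting the forward inclusion; the final step rests on an appeal to the ``simple geometry'' of the two arcs. You instead prove the internal-tangency identity $\bigl|x_c(t)+\tfrac{1-r^2}{2r}i\bigr| = R_r - R(t)$ (which checks out: both sides square to $\tfrac{(1-r^2)^2(1+r^2t^2)^2}{4r^2(1-r^2t^2)^2}$), conclude $\Omega\subseteq\mathcal{D}_1\cap\mathcal{D}_2$, and combine this with $\mathcal{A}_j\subseteq\overline{\Omega}$ (each envelope point lies on some circle of the family) and $\mathcal{A}_j\cap\mathcal{D}_1\cap\mathcal{D}_2=\emptyset$ to get $\mathcal{A}_j\subseteq\overline{\Omega}\setminus\Omega=\partial\Omega$. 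This is a clean, self-contained replacement for the paper's normal-ray argument, it does not depend on the already-established forward inclusion, and as a bonus it shows directly that each disk of the family is sandwiched between the two bounding circles. The cost is one extra algebraic identity, which you correctly flag as the nontrivial computation.
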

Mortini and Rupp gave a different proof of this theorem using complex analysis techniques in \cite{MR2015}. No\"el gave a computational proof of this using limits of intersections of the circles in \cite[Appendix A]{N12}. For applications of this result to function theory on the unit disk, see Remark \ref{rem:app1}.

\noindent \emph{Case $2$: Disks on a Horocycle.}  Fix $r\in (0,1), k>0$, and let $H(1,k)$ denote the circle with radius $\frac{k}{k+1}$ that is internally tangent to $\mathbb{T}$ at $z=1.$ It is easy to show that $H(1,k)$ has center $(\frac{1}{k+1},0)$. Equivalently, one can define
\begin{equation} \label{eqn:horo} H(1,k) = \left \{a \in \mathbb{D}: \tfrac{|a-1|^2}{1-|a|^2}=k\right \} \cup \{ 1\},\end{equation}
where the point $z=1$ is appended so that the set is an actual circle. This set $H(1,k)$ is often called a \emph{horocycle} in $\mathbb{D}$.

We consider the complicated family of pseudohyperbolic disks, $\{ D_\rho(\alpha,r)\}_{\alpha \in H(1,k)}$, whose centers lie along  this horocycle. To see our result,
define
the open disks
\begin{equation} \label{eqn:disks2} \mathscr{D}_1= \left\{z \in \mathbb{C} :\left |z- c_1\right| < R_1 \right\} \ \ \text{ and }  \ \ \mathscr{D}_2=  \{z \in \mathbb{C} :\left|z-c_2\right| < R_2 \} ,
\end{equation}
where
\begin{equation} \label{eqn:c1c2}
\begin{split}
c_1 = \tfrac{1-r}{(1-r) + k(1+r)} \ \ &\text{and} \ \ R_1 = 1-c_1 = \tfrac{k(1+r)}{(1-r)+k(1+r)},\\
c_2 = \tfrac{1+r}{(1+r)+k(1-r)} \ \ &\text{and} \ \  R_2 = 1-c_2 = \tfrac{k(1-r)}{(1+r)+k(1-r)}.
\end{split}
\end{equation}
Let $\overline{\mathscr{D}_j}$ and $\partial \mathscr{D}_j$ denote the closure and boundary of each $\mathscr{D}_j$, respectively. Then we prove: 
\begin{theorem*}[\ref{thm:bdy2}] The union $\bigcup_{\alpha\in H(1, k)} D_\rho(\alpha,r)$ has boundary $\partial \mathscr{D}_1 \cup \partial \mathscr{D}_2$.  
\end{theorem*}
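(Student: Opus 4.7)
The plan is to conjugate the problem by the Cayley-type map $\phi(z)=(1+z)/(1-z)$, which sends $\mathbb{D}$ biholomorphically onto the right half-plane $\mathbb{H}_R=\{w:\Re w>0\}$ with $\phi(1)=\infty$. The payoff is that the delicate pseudohyperbolic family becomes a translation family of Euclidean disks on a vertical line, whose union is visibly a strip.

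First I would verify three transport properties. (i) Horocycles at $z=1$ are circles in $\overline{\mathbb{D}}$ internally tangent to $\mathbb{T}$ at $1$, so under $\phi$ they become curves tangent to $\partial\mathbb{H}_R=i\mathbb{R}$ at $\infty$---that is, vertical lines; a single test point shows $\phi$ carries $H(1,k)\setminus\{1\}$ onto $\{\Re w=1/k\}$. (ii) A short computation gives $|(z-\alpha)/(1-\bar\alpha z)|=|\phi(z)-\phi(\alpha)|/|\phi(z)+\overline{\phi(\alpha)}|$, so $\phi$ sends each disk $D_\rho(\alpha,r)$ to the half-plane disk $\{w:|w-w_0|<r|w+\bar w_0|\}$ with $w_0=\phi(\alpha)$. (iii) Expanding this inequality for $w_0=a+ib$ with $a>0$ shows it is the Euclidean disk with center $\bigl(a(1+r^2)/(1-r^2),\,b\bigr)$ and radius $2ar/(1-r^2)$.

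The conclusion is now immediate. For $\alpha\in H(1,k)\setminus\{1\}$ we have $\phi(\alpha)=1/k+ib$ with $b$ ranging over $\mathbb{R}$, so every image disk has the same $x$-coordinate of center and the same radius---they are pure vertical translates. Their union is therefore the open strip
\[
S=\left\{w:\tfrac{1-r}{k(1+r)}<\Re w<\tfrac{1+r}{k(1-r)}\right\},
\]
whose boundary in $\mathbb{C}$ is two vertical lines. Using $|1-z|^2/(1-|z|^2)=1/c$ to verify $\phi^{-1}(\{\Re w=c\})=H(1,1/c)$, these two lines pull back to horocycles of parameters $k(1+r)/(1-r)$ and $k(1-r)/(1+r)$; direct comparison with~\eqref{eqn:c1c2} identifies them precisely as $\partial\mathscr{D}_1$ and $\partial\mathscr{D}_2$, and since $\phi$ is a homeomorphism this yields $\partial\Omega=\partial\mathscr{D}_1\cup\partial\mathscr{D}_2$.

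The main obstacle is not conceptual---the geometric picture is clean---but purely algebraic: correctly tracking three successive parameter conversions (horocycle parameter $k$ versus vertical-line location $1/k$, half-plane pseudohyperbolic disk versus its Euclidean center-radius form, and Euclidean data versus the formulas in~\eqref{eqn:c1c2}) without sign errors. A secondary technical point is that the pseudohyperbolic disks degenerate to $\{1\}$ as $\alpha\to1$ along $H(1,k)$, so $z=1$ sits in $\overline{\Omega}$ and on both $\partial\mathscr{D}_j$; under $\phi$ this corresponds to the point at infinity bordering $S$ and needs no separate treatment. A fully envelope-based route---parameterizing $H(1,k)\setminus\{1\}$ by $\alpha(\theta)=(1+ke^{i\theta})/(k+1)$ and invoking Theorem~\ref{thm:circle} with the envelope algorithm---also succeeds, but it is substantially heavier because the center and radius in~\eqref{eqn:cR} both depend nonlinearly on $\theta$.
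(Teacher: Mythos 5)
Your proof is correct, and it takes a genuinely different route from the paper. The paper stays inside the disk: it rewrites $\{\partial D_\rho(\alpha,r)\}_{\alpha\in H(1,k)}$ as a family of Euclidean circles whose centers trace an ellipse (Theorem \ref{thm:euclidean}), verifies the hypotheses of Theorem \ref{thm:circle}, computes the limiting-position envelope from the parameterization in Remark \ref{rem:formulas}, identifies it with $\partial\mathscr{D}_1\cup\partial\mathscr{D}_2$ by a fairly heavy algebraic simplification, and then proves the reverse containment by separate tangency and normal-ray arguments. You instead conjugate by $\phi(z)=(1+z)/(1-z)$ and exploit two exact invariances: $\Re\phi(z)=(1-|z|^2)/|1-z|^2$, so horocycles at $1$ become vertical lines, and $d_\rho(z,\alpha)=|\phi(z)-\phi(\alpha)|/|\phi(z)+\overline{\phi(\alpha)}|$, so the pseudohyperbolic disks become Euclidean disks whose center abscissa and radius depend only on $\Re\phi(\alpha)=1/k$. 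The family is then a pure vertical-translation family, its union is visibly the strip $S$, and the two bounding lines pull back to the horocycles with parameters $k(1+r)/(1-r)$ and $k(1-r)/(1+r)$, which are exactly $\partial\mathscr{D}_1$ and $\partial\mathscr{D}_2$ by \eqref{eqn:c1c2}. All of your computations check out (I verified the disk center $\bigl(a(1+r^2)/(1-r^2),b\bigr)$, radius $2ar/(1-r^2)$, and the strip endpoints $(1\mp r)/\bigl(k(1\pm r)\bigr)$). Two points deserve one extra sentence each in a final write-up: the passage from ``union of translated disks'' to ``open strip'' is a (trivial) set identity worth stating, and the boundary point $z=1$ should be handled by viewing $\phi$ as a homeomorphism of the Riemann sphere, so that $\partial_{\widehat{\mathbb{C}}}S=\partial_{\mathbb{C}}S\cup\{\infty\}$ pulls back to $\partial\mathscr{D}_1\cup\partial\mathscr{D}_2$ including the tangency point $1$. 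What your approach buys is a dramatically shorter and more conceptual proof with no envelope machinery at all; what the paper's approach buys is an illustration of its main Theorem \ref{thm:circle} (the stated purpose of the section) and a method that does not depend on the center curve being a single M\"obius orbit of the tangency point.
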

As before, the proof rests on Theorem \ref{thm:circle}. However, as this family of circles is much more complicated, one cannot easily apply the envelope algorithm to get the discriminant envelope. Instead we apply Remark \ref{rem:formulas} to obtain the limiting-position envelope and use that envelope instead. Then Remark \ref{rem:app2} discusses applications of Theorem \ref{thm:bdy2} to interpolation.

\subsection*{Outline of Paper} In what follows, we first prove the elliptical range theorem. Section \ref{sec:ERT1} includes the background and setup, while Section \ref{sec:ERT2} involves the heart of the proof, including the use of the envelope algorithm. 
We then consider Theorem \ref{thm:circle}, which characterizes the boundaries of families of intersecting circles, and its applications to pseudohyperbolic disks. Section \ref{sec:circles} includes both the proof of Theorem \ref{thm:circle} and Remark \ref{rem:formulas}, which illuminates the structure of the limiting-position envelope. Sections \ref{sec:line} and \ref{sec:horocycle} detail the proofs of Theorem \ref{thm:bdy} and Theorem \ref{thm:bdy2}, which apply Theorem \ref{thm:circle} to families of pseudohyperbolic disks whose centers lie along a line segment and a horocycle respectively.

\section*{Acknowledgements}
The authors would like to thank Elias Wegert for sharing the envelope algorithm and Phuong Nguyen for her work on pseudohyperbolic disks and in particular, her contributions to this new proof of Theorem \ref{thm:bdy}.

\section{Elliptical range theorem: background and Setup} \label{sec:ERT1}

\subsection{Background}To prove the elliptical range theorem, we will require the following basic properties of numerical ranges, which one can find in the early sections of \cite{K08}.

\begin{theorem}[Elementary properties]\label{basic} Let $B$ be an $n\times n$ matrix. Then
	\begin{enumerate}[a.]
		\item If $U$ is an $n\times n$ unitary matrix, then
		$
		W(U^\star B U)=W(B).
		$
		
		\item If $\alpha, \beta \in \mathbb{C}$, then
		$
		W(\alpha B + \beta I) = \alpha W(B)+\beta := \{\alpha z + \beta: z\in W(B)\}.
		$
		
		\item If $\lambda\in \mathbb{C}$, then $W(B) = \{\lambda\}$ if and only if $B = \lambda I$.		
	\end{enumerate}	
\end{theorem}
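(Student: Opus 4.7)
The plan is to verify each of the three properties by a direct manipulation of the definition of $W(B)$, with only the converse of part (c) requiring a genuinely nontrivial ingredient. Parts (a) and (b) are one-line calculations and part (c) will reduce, via (b), to a standard polarization argument.

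For part (a), I would begin with the identity $\langle U^\star B U x, x\rangle = \langle B(Ux), Ux\rangle$, which holds by the defining property of the adjoint. Since $U$ is unitary, the map $x \mapsto Ux$ is a bijection of the unit sphere of $\mathbb{C}^n$ onto itself, so substituting $y = Ux$ shows that the sets $\{\langle U^\star B U x, x\rangle : \|x\|=1\}$ and $\{\langle By, y\rangle : \|y\|=1\}$ coincide. For part (b), sesquilinearity of the inner product gives
\[ \langle (\alpha B + \beta I) x, x\rangle \;=\; \alpha \langle Bx, x\rangle + \beta \|x\|^2 \;=\; \alpha \langle Bx, x\rangle + \beta \]
for every unit vector $x$, and taking the image as $x$ ranges over the unit sphere yields $W(\alpha B + \beta I) = \alpha W(B) + \beta$.

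For part (c), the "if" direction is the same one-line calculation: if $B = \lambda I$, then $\langle Bx, x\rangle = \lambda \|x\|^2 = \lambda$ for every unit $x$. For the converse, I would first apply part (b) with $\alpha = 1$ and $\beta = -\lambda$ to reduce to the case $\lambda = 0$. It then suffices to show that $\langle Bx, x\rangle = 0$ for every unit vector $x$ --- equivalently, for every $x \in \mathbb{C}^n$, by homogeneity --- forces $B = 0$. Here I would apply the complex polarization identity
\[ 4\langle Bx, y\rangle \;=\; \langle B(x+y), x+y\rangle - \langle B(x-y), x-y\rangle + i\langle B(x+iy), x+iy\rangle - i\langle B(x-iy), x-iy\rangle, \]
whose right-hand side vanishes term by term under the hypothesis. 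This forces $\langle Bx, y\rangle = 0$ for all $x, y \in \mathbb{C}^n$, and hence $B = 0$.

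The main obstacle --- though a mild one --- is precisely this converse: the identity above relies essentially on the scalar field being $\mathbb{C}$ (the analogous real polarization does not close up), so one must use all four shifts $x \pm y$, $x \pm iy$ rather than only the two real ones. Once this is in hand, everything else is routine.
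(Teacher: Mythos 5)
Your proof is correct and complete. Note that the paper does not actually prove this theorem---it is stated as a collection of known elementary facts with a pointer to the early sections of \cite{K08}---so there is no argument to compare against; your route (unitary change of variables on the unit sphere for (a), sesquilinearity for (b), and the complex polarization identity for the converse direction of (c), after using (b) to reduce to $\lambda=0$) is the standard one, and your remark that the full complex polarization with all four shifts $x\pm y$, $x\pm iy$ is genuinely needed is exactly the right point to flag.
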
	

We also need an important result about $2\times 2$ matrices.

\begin{lemma} \label{l1}
	Let $A$ be a $2\times 2$ matrix with eigenvalues $a$ and $b$. Then $A$ is unitarily equivalent to an upper triangular matrix $T$, where $T=\begin{bmatrix}
	a& p\\
	0& b\\
	\end{bmatrix}$, for $p \geq 0$.	
\end{lemma}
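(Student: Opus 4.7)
My plan is to establish this via the usual two-step Schur-type procedure for $2\times 2$ matrices: first triangularize by a unitary that starts with an eigenvector, then apply a diagonal unitary to rotate the phase of the off-diagonal entry so it becomes non-negative.

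First, I would pick a unit eigenvector $v_1 \in \mathbb{C}^2$ with $A v_1 = a v_1$; such a vector exists because $a$ is an eigenvalue (this works even when $a = b$, with the understanding that the geometric multiplicity may be $1$). I would then extend $\{v_1\}$ to an orthonormal basis $\{v_1, v_2\}$ of $\mathbb{C}^2$ (for example, by choosing any unit vector orthogonal to $v_1$) and let $U_1$ be the unitary matrix whose columns are $v_1$ and $v_2$. A direct computation shows that $U_1^\star A U_1$ has first column $\begin{bmatrix} a \\ 0 \end{bmatrix}$, so $U_1^\star A U_1$ is upper triangular of the form $\begin{bmatrix} a & q \\ 0 & d \end{bmatrix}$ for some $q, d \in \mathbb{C}$. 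Since unitary similarity preserves the characteristic polynomial, the eigenvalues of $U_1^\star A U_1$ are again $a$ and $b$, which forces $d = b$.

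Next, to make the off-diagonal entry non-negative, write $q = |q| e^{i\theta}$ (taking $\theta = 0$ if $q = 0$) and set $U_2 = \begin{bmatrix} 1 & 0 \\ 0 & e^{-i\theta} \end{bmatrix}$, which is unitary. A short computation gives
\[
U_2^\star \begin{bmatrix} a & q \\ 0 & b \end{bmatrix} U_2 \;=\; \begin{bmatrix} a & q e^{-i\theta} \\ 0 & b \end{bmatrix} \;=\; \begin{bmatrix} a & |q| \\ 0 & b \end{bmatrix}.
\]
Setting $U = U_1 U_2$ (a product of unitaries, hence unitary) and $p = |q| \geq 0$ yields $U^\star A U = \begin{bmatrix} a & p \\ 0 & b \end{bmatrix}$, as desired.

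There is no real obstacle here; the only mild subtlety is checking that the argument goes through uniformly whether or not $A$ has distinct eigenvalues (handled by the fact that \emph{some} unit eigenvector for $a$ always exists), and verifying that phase-rotating by the diagonal unitary $U_2$ does not disturb the diagonal entries $a$ and $b$. Both points are immediate from the block computation above.
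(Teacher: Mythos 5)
Your proof is correct and matches the paper's approach: the paper simply cites Schur's theorem and calls the lemma a direct consequence, and your argument is exactly the standard Schur triangularization for the $2\times 2$ case, together with the diagonal phase rotation needed to make the off-diagonal entry non-negative. Nothing further is required.
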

 
This lemma is well known and is a direct consequence of Schur's theorem \cite[Section $2.3$]{hj90}. It follows that since $A$ is unitarily equivalent to $T$, then $A^\star A$ is also unitarily equivalent to $T^\star T$ and so,  
$\text{tr}(A^\star A) = \text{tr}(T^\star T) = |a|^2 + |b|^2 +p^2.$ This is equivalent to 
\begin{equation}\label{p}
	p = (\text{tr}(A^\star A) - |a|^2 - |b|^2)^{1/2},
\end{equation}
which is the minor axis in the elliptical range theorem. 
We first examine a simple example.

\begin{lemma}\label{NumOfR}
	For
	$
	R = 
	\begin{bmatrix}
	0&1\\
	0&0
	\end{bmatrix}, 
	$
	$W(R)$ is a circular disk of radius $\frac{1}{2}$ centered at the origin.  
\end{lemma}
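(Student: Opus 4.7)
The plan is to compute $W(R)$ directly from the definition, since the matrix is simple enough to handle by hand. Writing a unit vector as $x = (x_1, x_2) \in \mathbb{C}^2$ with $|x_1|^2 + |x_2|^2 = 1$, a quick calculation gives $Rx = (x_2, 0)$ and therefore $\langle Rx, x \rangle = x_2 \overline{x_1}$. So the question becomes: what is the image of the set $\{(x_1,x_2) : |x_1|^2 + |x_2|^2 = 1\}$ under the map $(x_1,x_2) \mapsto x_2 \overline{x_1}$?

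For the upper bound, I would apply the AM-GM inequality: $|x_2 \overline{x_1}| = |x_1||x_2| \leq \tfrac{1}{2}(|x_1|^2 + |x_2|^2) = \tfrac{1}{2}$. This shows $W(R)$ is contained in the closed disk of radius $\tfrac{1}{2}$ centered at the origin.

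For the reverse inclusion, I would parameterize: set $x_1 = \cos\theta$ and $x_2 = \sin\theta \, e^{i\varphi}$ for $\theta \in [0,\pi/2]$ and $\varphi \in [0, 2\pi)$. This gives a unit vector, and $\langle Rx, x\rangle = \sin\theta \cos\theta \, e^{i\varphi} = \tfrac{1}{2} \sin(2\theta) e^{i\varphi}$. As $\theta$ varies over $[0, \pi/2]$, $\sin(2\theta)$ sweeps out $[0,1]$, and $\varphi$ provides all possible arguments, so the map attains every value in the closed disk of radius $\tfrac{1}{2}$ centered at $0$.

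There is no real obstacle here: the proof is a direct computation, and the only minor subtlety is noting that the numerical range as defined in the paper is a set of values of a continuous map on a compact set, which gives the (closed) disk, matching the statement. Combining the two inclusions yields the claim.
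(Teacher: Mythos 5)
Your proof is correct and follows essentially the same route as the paper: both compute $\langle Rx,x\rangle = x_2\overline{x_1}$ for a parameterized unit vector and observe that the modulus sweeps out exactly $[0,\tfrac12]$ while the argument is free (the paper uses $t\sqrt{1-t^2}$ with $t\in[0,1]$ where you use $\tfrac12\sin(2\theta)$, which is the same parameterization in disguise). The AM--GM upper bound is a small tidy addition but not a different method.
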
	
\begin{proof} A vector $z \in \mathbb{C}^2$ satisfies $\| z\|=1$ precisely when we can write
$$
z = \begin{bmatrix}te^{i\theta_1}\\ \sqrt{1-t^2}e^{i\theta_2}\end{bmatrix}, \text{where } \theta_1, \theta_2 \in [0,2\pi] \text{ and } t\in [0, 1].
$$
A simple computation gives $\langle Rz, z \rangle = e^{i\theta} t\sqrt{1-t^2}$, where $\theta = (\theta_2-\theta_1)$. Letting $\theta$ range over $[0,2\pi]$ shows $W(R)$ is composed of the set of circles centered at the origin with radius  $t \sqrt{1-t^2}$, for $t\in [0,1].$ As $t \sqrt{1-t^2}$ attains precisely the values
 between $0$ and $\frac{1}{2}$ on $[0,1]$, the numerical range $W(R)$ is the circular disk centered at the origin with radius $\frac{1}{2}$. 	
\end{proof}

\begin{figure}[H]
\centering
	\includegraphics[width=4cm]{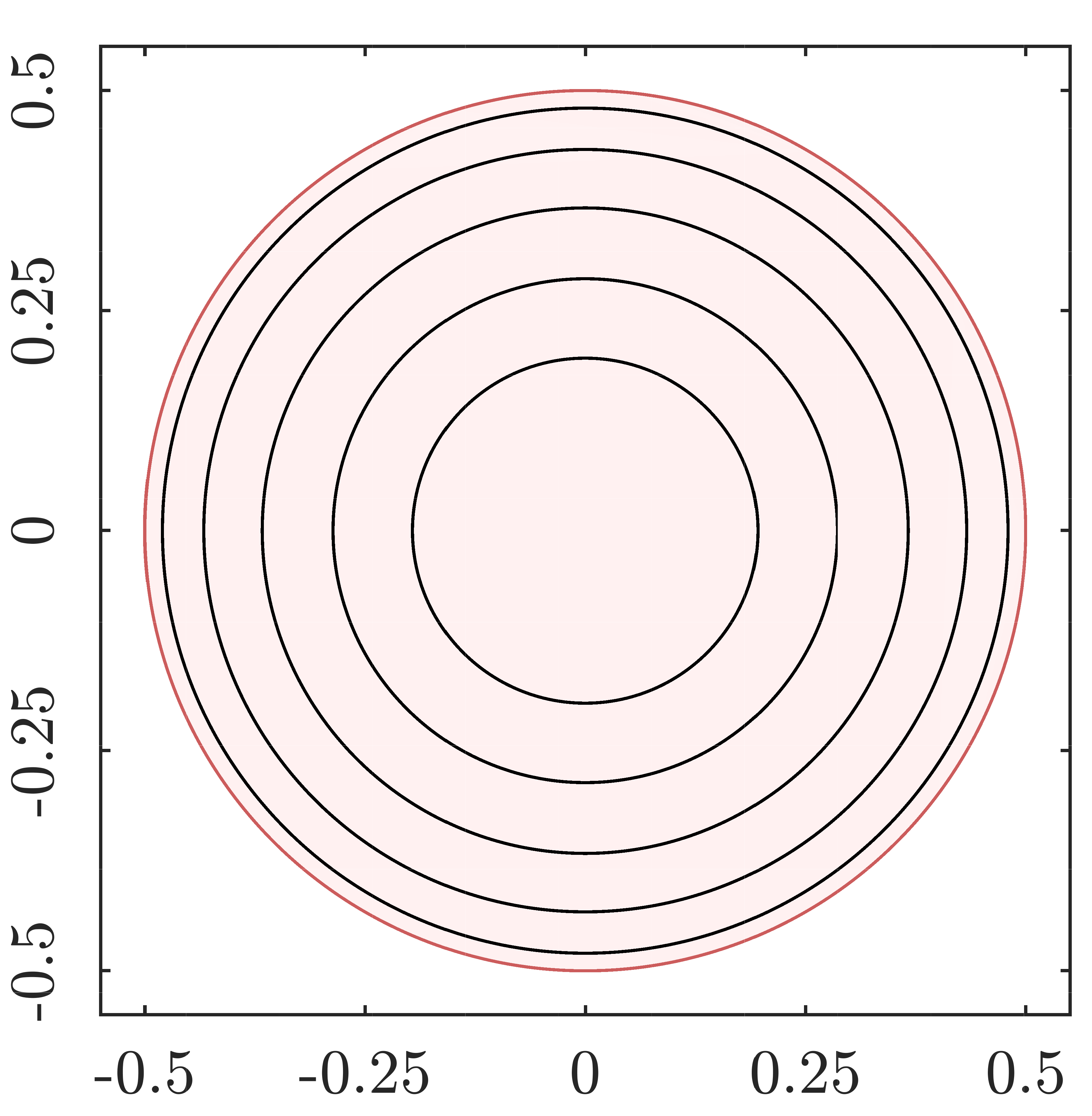}
\caption{\small $W(R)$ as a family of circles.}
\end{figure}

In what follows, $R$ will denote the matrix from Lemma \ref{NumOfR}.

\subsection{Proof Setup} Fix a $2\times 2$ matrix $A$. We will prove the elliptical range theorem by considering two separate cases:  in Subsection \ref{idenCase},
we consider  $A$ with repeated eigenvalues and  in Subsection \ref{distCase}, we consider $A$ with distinct eigenvalues. The proof of the distinct case relies on an additional result proved in Section \ref{sec:ERT2}.

\subsubsection{Repeated eigenvalues} \label{idenCase} Suppose that the eigenvalues of $A$ are equal. Then:
\begin{lemma}
	 Let $A$ be a $2\times 2$ matrix. If $A$ has repeated eigenvalue $a$, then $W(A)$ is a circular disk with center at $a$ and diameter $(\text{tr}(A^\star A) - 2|a|^2)^{1/2}$.	
\end{lemma}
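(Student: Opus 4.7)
The plan is to reduce the problem to the concrete nilpotent matrix $R$ already analyzed in Lemma \ref{NumOfR}, exploiting unitary invariance and affine behavior of $W$.

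First I would invoke Lemma \ref{l1}: since $A$ has repeated eigenvalue $a$, it is unitarily equivalent to an upper triangular matrix
\[ T = \begin{bmatrix} a & p \\ 0 & a \end{bmatrix}, \quad p \geq 0. \]
By Theorem \ref{basic}(a), $W(A) = W(T)$, so it suffices to compute $W(T)$.

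Next, I would observe the algebraic identity $T = aI + pR$, where $R$ is the matrix from Lemma \ref{NumOfR}. Applying Theorem \ref{basic}(b) twice (with $\alpha = p, \beta = 0$ and then $\alpha = 1, \beta = a$) gives
\[ W(T) = p\, W(R) + a. \]
By Lemma \ref{NumOfR}, $W(R)$ is the closed disk of radius $\tfrac{1}{2}$ centered at $0$, so $W(T)$ is the closed disk centered at $a$ with radius $p/2$, i.e.\ diameter $p$.

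Finally, I would identify $p$ using the computation \eqref{p} that follows Lemma \ref{l1}: since the eigenvalues coincide ($b = a$),
\[ p = \bigl(\text{tr}(A^\star A) - |a|^2 - |b|^2\bigr)^{1/2} = \bigl(\text{tr}(A^\star A) - 2|a|^2\bigr)^{1/2}, \]
which is exactly the claimed diameter. There is no real obstacle here; every step is already packaged in the preceding lemmas, and the only thing to be careful about is ensuring $p \geq 0$ so that the dilation $p\, W(R) + a$ is genuinely a disk of radius $p/2$ (the degenerate case $p = 0$ corresponds to $A = aI$, where $W(A) = \{a\}$ is a ``disk'' of diameter $0$, consistent with Theorem \ref{basic}(c)).
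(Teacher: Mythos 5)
Your argument is correct and follows exactly the paper's own route: reduce to the upper triangular form $T = aI + pR$ via Lemma \ref{l1}, apply the unitary invariance and affine properties from Theorem \ref{basic} to get $W(A) = pW(R) + a$, invoke Lemma \ref{NumOfR}, and identify $p$ via \eqref{p}. Nothing is missing; your remark on the degenerate case $p=0$ is a small bonus the paper leaves implicit.
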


\begin{proof} If $A$ has repeated eigenvalue $a$, then Theorem \ref{basic} implies that 
$$
W(A) = W(T) = W(pR+aI) = pW(R)+a.
$$
Then by Lemma \ref{NumOfR}, $W(R)$ is circular disk of radius $\frac{1}{2}$ centered at the origin.  Addition of $a$ translates the center of the circular disk from the origin to the eigenvalue $a$.  Multiplication of $W(R)$ by $p$ scales the diameter of this circular disk from 1 to $p$. Hence, the diameter of $W(A)$ must equal $p = (\text{tr}(A^\star A) - 2|a|^2)^{1/2}$, by \eqref{p}.
\end{proof}

\subsubsection{Distinct eigenvalues} \label{distCase}
Suppose $A$ has distinct eigenvalues $a$ and $b$ and define
$$
\tilde{T} = \frac{T - aI}{b - a} = \begin{bmatrix}
0&\frac{p}{b-a}\\
0&1
\end{bmatrix} \ \ \text{ and }  \ \ T' = \begin{bmatrix}
0&m\\
0&1
\end{bmatrix} 
$$ 
for $m = \frac{p}{|b-a|} \ge 0$. Then $\tilde{T}$ and $T'$ are unitarily equivalent and by Theorem \ref{basic}, we have
$$
W(A) = W(T) = (b-a)W(\tilde{T})+a =(b-a)W\left (T' \right)+a.
$$
If $m =0$, then Lemma \ref{lem:Ct} will show that $W(T')$ is the line segment $[0,1]$ and so, $W(A)$ is the closed line segment from $a$ to $b$, as needed. If $m>0$, 
let $\overline{\mathcal{E}}$ denote the closed elliptical disk satisfying the inequality
\begin{equation} \label{eqn:ED}
	\frac{\left(x-\frac{1}{2}\right)^2}{1+m^2} +\frac{y^2}{m^2} \leq \frac{1}{4}.
\end{equation}
We denote its interior by $\mathcal{E}$ and its boundary by $\partial \mathcal{E}$. In Section \ref{sec:ERT2}, we will prove
\begin{theorem}\label{thm:E}
	The numerical range $W(T') = \overline{\mathcal{E}}$.
\end{theorem}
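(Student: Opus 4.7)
My plan is to execute the strategy indicated in the introduction: realize $W(T')$ as a union of circles indexed by $t\in[0,1]$, apply the envelope algorithm to identify $\partial\mathcal{E}$, then verify that this envelope really is the boundary of the union. Writing an arbitrary unit vector in $\mathbb{C}^2$ as $z = (t e^{i\theta_1},\sqrt{1-t^2}\,e^{i\theta_2})^T$ with $t\in[0,1]$ and $\theta_1,\theta_2\in[0,2\pi]$, a direct computation yields
$$\langle T' z, z\rangle \;=\; (1-t^2) \,+\, m\, t\sqrt{1-t^2}\; e^{i(\theta_2-\theta_1)}.$$
For each fixed $t$, the phase difference $\theta_2-\theta_1$ ranges over $[0,2\pi]$, tracing the circle $\mathcal{C}_t$ centered at $(1-t^2,0)$ with radius $mt\sqrt{1-t^2}$. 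Hence $W(T') = \bigcup_{t\in[0,1]} \mathcal{C}_t$, with the degenerate endpoints $\mathcal{C}_0=\{1\}$ and $\mathcal{C}_1=\{0\}$ corresponding to the eigenvectors of $T'$.

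Next, I apply the envelope algorithm to $F(x,y,t) = (x-(1-t^2))^2 + y^2 - m^2 t^2(1-t^2)$. The substitution $s = 1-t^2$ turns $F=0$ into the quadratic in $s$
$$q(s) \;:=\; (1+m^2)s^2 - (2x+m^2)s + (x^2+y^2) \;=\; 0.$$
Solving $q'(s)=0$ locates the critical value $s^\ast = (2x+m^2)/(2(1+m^2))$. Substituting $s^\ast$ back into $q=0$ and clearing denominators reduces to $\frac{(x-1/2)^2}{1+m^2} + \frac{y^2}{m^2} = \frac{1}{4}$, i.e.\ $\partial\mathcal{E}$, so the discriminant envelope is exactly the bounding ellipse from \eqref{eqn:ED}.

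To promote the envelope equation to $W(T')=\overline{\mathcal{E}}$, I argue via the discriminant of $q$ viewed as a polynomial in $s$. A short manipulation shows this discriminant equals $(1+m^2)(m^2-4y^2) - 4m^2(x-1/2)^2$, which is nonnegative precisely when $(x,y)\in\overline{\mathcal{E}}$; this already gives $\bigcup_t \mathcal{C}_t \subseteq \overline{\mathcal{E}}$. For the reverse containment, one checks that whenever $(x,y)\in\overline{\mathcal{E}}$ the vertex $s^\ast$ lies in $[0,1]$ (the inequalities reduce to $\sqrt{1+m^2}\le 1+m^2$), while $q(0) = x^2+y^2\ge 0$ and $q(1) = (x-1)^2+y^2\ge 0$; therefore both real roots of $q$ lie in $[0,1]$ and yield admissible parameters $t$. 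Alternatively, one may invoke Lemma~\ref{lemma:forward} directly to pass from the envelope identification to the boundary statement, once the containment $\bigcup_t \mathcal{C}_t \subseteq \overline{\mathcal{E}}$ has been noted.

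The step I expect to be the main obstacle is exactly the bridge from the \emph{algebraic} envelope supplied by the envelope algorithm to the \emph{geometric} boundary of $\bigcup_t \mathcal{C}_t$: the envelope algorithm outputs only candidate boundary points, and one must still rule out spurious pieces and confirm that every point of $\overline{\mathcal{E}}$ is actually hit by some $\mathcal{C}_t$ with $t\in[0,1]$. In the proof above this is handled by the discriminant/vertex analysis of $q$, which is precisely the kind of ``extra verification'' that Lemma~\ref{lemma:forward} is designed to codify.
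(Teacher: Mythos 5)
Your proof is correct, and while the opening moves (realizing $W(T')$ as the union of the circles $\mathcal{C}_t$ and running the envelope algorithm to produce $\partial\mathcal{E}$) coincide with the paper's Lemmas \ref{lem:Ct} and \ref{lemma:family}, the way you close the argument is genuinely different and arguably cleaner. The paper proves the containment $\bigcup_t\mathcal{C}_t\subseteq\overline{\mathcal{E}}$ (Lemma \ref{lemma:forward}) by a calculus optimization: it fixes $\theta$, writes the left side of \eqref{eqn:ED} as a function $L_\theta(t)$, and locates its critical points; the reverse containment (Lemma \ref{lemma:backward}) is an intermediate-value argument on $G(t)=F(x_0,y_0,t)$ using an interior disk $\mathcal{D}_{t_0}$. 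Your substitution $s=1-t^2$ replaces both steps with elementary facts about the quadratic $q(s)=(1+m^2)s^2-(2x+m^2)s+(x^2+y^2)$: nonnegativity of its discriminant is literally equivalent to $(x,y)\in\overline{\mathcal{E}}$, which gives the forward containment for free, and the vertex/endpoint checks ($s^\ast\in[0,1]$, $q(0)\ge0$, $q(1)\ge0$, $q(s^\ast)\le0$) locate both roots of $q$ in $[0,1]$, giving the reverse containment. The only slip is cosmetic: the discriminant envelope is $\partial\mathcal{E}\cup\{(1,0)\}$, not $\partial\mathcal{E}$ alone, because $F_t(x,y,0)\equiv0$ (your substitution hides this since $ds/dt$ vanishes at $t=0$); as $(1,0)\in\overline{\mathcal{E}}$ this is immaterial to the theorem. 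What the paper's route buys is a template that carries over to the later sections on general families of circles, where no such algebraic reduction is available; what yours buys is a shorter, fully algebraic proof of Theorem \ref{thm:E} in which membership in $\overline{\mathcal{E}}$ and solvability for an admissible $t$ are visibly the same condition.
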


The elliptical range theorem for $A$ follows easily from Theorem \ref{thm:E}. First, the relationship between $W(T')$ and $W(A)$ implies that $W(A)$ is also a closed elliptical disk.  By \eqref{eqn:ED}, $W(T')$ has foci $z=0$ and $z=1$ and minor axis of length $m$. Scaling $W(T')$ by $(b-a)$ and translating it by $a$ moves these foci to $z=a$ and $z=b$, the eigenvalues of $A$. Similarly, scaling transforms the length of the minor axis to $|b-a|m=p= (\text{tr}(A^\star A) - |a|^2 - |b|^2)^{1/2}$, as needed.

\section{Proof of Theorem \ref{thm:E}} \label{sec:ERT2}

To prove Theorem \ref{thm:E}, we first show that $W(T')$ is the union of a family of circles. Then we use the envelope algorithm to show that the discriminant envelope of that family of circles equals $\partial \mathcal{E}$. Finally we use those results to show that $\overline{\mathcal{E}}$ is exactly the union of the circles, which proves Theorem \ref{thm:E}.     

For $t\in [0,1]$, let $\mathcal{C}_t$ denote the circle 
\begin{equation}\label{eqn:circles2}
	(x - (1 - t^2))^2+y^2 = m^2t^2(1-t^2).
\end{equation}
Then we prove the following elementary result:
\begin{lemma} \label{lem:Ct} The numerical range $W(T') = \bigcup_{t\in[0,1]} \mathcal{C}_t.$
\end{lemma}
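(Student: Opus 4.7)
The plan is to parametrize an arbitrary unit vector $z\in\mathbb{C}^2$ by two parameters, directly compute the quadratic form $\langle T'z,z\rangle$, and recognize the resulting expression as a parametrization of the circles $\mathcal{C}_t$.

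More precisely, any unit vector $z=(z_1,z_2)^{\top}\in\mathbb{C}^2$ can be written as
\[
z=\begin{bmatrix} t e^{i\alpha}\\ \sqrt{1-t^2}\, e^{i\beta}\end{bmatrix}, \qquad t\in[0,1],\ \alpha,\beta\in[0,2\pi].
\]
I would first apply $T'$ to this vector and take the inner product with $z$. Because the first column of $T'$ is zero, the computation collapses nicely and yields
\[
\langle T'z,z\rangle = (1-t^2) + m\, t\sqrt{1-t^2}\, e^{i(\beta-\alpha)}.
\]
Setting $\theta=\beta-\alpha$ and letting $\theta$ range over $[0,2\pi]$, for each fixed $t\in[0,1]$ this traces the circle in $\mathbb{C}$ centered at $1-t^2$ with radius $m\, t\sqrt{1-t^2}$, which is exactly $\mathcal{C}_t$ as defined in \eqref{eqn:circles2}.

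Taking the union over $t\in[0,1]$ gives one containment; the reverse containment is immediate since every point on any $\mathcal{C}_t$ is realized as $\langle T'z,z\rangle$ for an explicit choice of $t$, $\alpha$, $\beta$ above. I would also briefly note the degenerate endpoints: $\mathcal{C}_0=\{1\}$ and $\mathcal{C}_1=\{0\}$, the two eigenvalues of $T'$, and that when $m=0$ the union collapses to the segment $[0,1]\subset\mathbb{R}$ (matching the claim used in Subsection \ref{distCase} for the $m=0$ case).

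There is no real obstacle here—this is a one-line computation once the unit vector is written in polar form, and the only thing to be a little careful about is using the standard parametrization of unit vectors in $\mathbb{C}^2$ and noting that only the phase difference $\beta-\alpha$ enters $\langle T'z,z\rangle$, which is why a full circle (rather than a proper arc) is swept out for each fixed $t$.
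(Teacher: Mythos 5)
Your proof is correct and follows essentially the same route as the paper: the same polar parametrization of unit vectors in $\mathbb{C}^2$, the same one-line computation of $\langle T'z,z\rangle$, and the same identification of the resulting set with the circles $\mathcal{C}_t$. The extra remarks about the degenerate endpoints and the $m=0$ case are fine but not needed for the lemma itself.
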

\begin{proof} If $z\in \mathbb{C}^2$, then $\| z\| =1$ precisely when one can write $
z = \begin{bmatrix}te^{i\theta_1}\\ \sqrt{1-t^2}e^{i\theta_2}\end{bmatrix},$ for some $\theta_1, \theta_2 \in [0,2\pi], \text{ and } t\in [0, 1]$.
This formula for $z$ gives
$$
\langle T'z, z\rangle = (1 - t^2) + m e^{i\theta} (t \sqrt{1 - t^2}), \text{ where } \theta = (\theta_2 - \theta_1).
$$
This shows that each $(x,y)\in W(T')$ has the form
\begin{equation}\label{eqn:(x,y)}
x = (1 - t^2) + m \cos \theta  (t \sqrt{1 - t^2}) \
~\mbox{ and }~ 
\ y = m \sin \theta (t \sqrt{1 - t^2}),
\end{equation}
for some $t\in [0,1]$ and $\theta \in [0, 2\pi]$ and each such point is
in $W(T')$, as needed. \end{proof}

We now assume $m>0$ and apply the envelope algorithm to find the discriminant envelope of the family of circles in \eqref{eqn:circles2}. In fact, we prove:

\begin{lemma}\label{lemma:family}
	The discriminant envelope of the family of circles $\{ \mathcal{C}_t\}_{t\in[0,1]}$ is the union of the ellipse $\partial \mathcal{E}$ and the single point $(1,0).$ 
\end{lemma}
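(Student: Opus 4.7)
The plan is to apply the envelope algorithm directly to $F(x,y,t) = (x - (1-t^2))^2 + y^2 - m^2 t^2(1-t^2)$ and then do a careful case analysis based on the factor structure of $F_t$.

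First I would compute $F_t$. A short calculation shows that $F_t$ has an overall factor of $2t$, so the equation $F_t = 0$ splits into two cases. In the degenerate case $t=0$, the relation $F(x,y,0) = 0$ forces $(x-1)^2 + y^2 = 0$, which contributes only the isolated point $(1,0)$ to the discriminant envelope. In the generic case $t \neq 0$, the equation $F_t = 0$ becomes a single linear relation between $x$ and $t^2$, which I would solve to obtain a clean expression of the form $t^2 = \frac{m^2 + 2 - 2x}{2(m^2+1)}$.

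Next, I would eliminate $t$ by substituting the expression for $t^2$ back into $F(x,y,t) = 0$. Writing $u = t^2$, both $(x - 1 + u)^2$ and $u(1-u)$ become rational functions of $x$ with common denominator $(m^2+1)^2$; clearing denominators leaves a polynomial identity that, after expansion and collecting the $x^2$, $x$, $y^2$, and constant terms, simplifies through a common factor of $(m^2+1)$ to
\[
4m^2\bigl(x - \tfrac{1}{2}\bigr)^2 + 4(m^2+1) y^2 = m^2(m^2+1).
\]
Dividing by $m^2(m^2+1)$ gives exactly the ellipse equation \eqref{eqn:ED} (with equality), so the $t \neq 0$ branch produces the ellipse $\partial \mathcal{E}$.

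Finally, I would verify that every point of $\partial \mathcal{E}$ is actually realized by some admissible parameter $t \in [0,1]$, since the envelope algorithm is a necessary condition and could in principle yield spurious points. Using that $\partial \mathcal{E}$ has $x$-range $[\tfrac{1}{2} - \tfrac{1}{2}\sqrt{m^2+1}, \tfrac{1}{2} + \tfrac{1}{2}\sqrt{m^2+1}]$, the inequalities $\sqrt{m^2+1} \le m^2 + 1$ (for both signs) show that the formula for $t^2$ lands in $[0,1]$ on the entire ellipse, so $t = \sqrt{u} \in [0,1]$ is admissible. Combining the two cases then identifies the discriminant envelope as $\partial \mathcal{E} \cup \{(1,0)\}$.

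The main obstacle is purely algebraic: the substitution step produces a fourth-degree polynomial identity that must collapse to the clean ellipse equation, and it is easy to lose the common factor $(m^2+1)$ or mishandle the $m^2$ versus $m^4$ terms. I would therefore keep $u = t^2$ as a single variable throughout and factor the right-hand side $(m^2 + 2 - 2x)(m^2 + 2x)$ before substituting, which keeps the intermediate expressions manageable and makes the cancellation transparent.
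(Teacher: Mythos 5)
Your proposal is correct, and it follows the same overall skeleton as the paper's proof (apply the envelope algorithm; note that $F_t$ carries a factor of $2t$, so $t=0$ degenerates to the single point $(1,0)$; the branch $t\neq 0$ yields the ellipse). Where you genuinely diverge is in how the elimination and the coverage check are carried out. The paper keeps $t$ as the parameter: it solves for $x(t)$ and $y(t)=\pm\sqrt{g(t)}$, verifies these satisfy the ellipse equation, and then establishes that the parametric curve traces \emph{all} of $\partial\mathcal{E}$ by a sign analysis of $g$ (negative near $t=0,1$, positive at $t=1/\sqrt{2}$, hence zeros $t_1<t_2$ whose images must be the major-axis endpoints since $x(t)$ is strictly decreasing, followed by a continuity argument). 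You instead invert the relation $F_t=0$ to get $t^2=\tfrac{m^2+2-2x}{2(m^2+1)}$ as a function of $x$ and substitute, so the ellipse equation appears as an identity and the surjectivity question collapses to checking that this expression lies in $(0,1]$ over the $x$-range of the ellipse --- which your inequality $\sqrt{m^2+1}\le m^2+1$ does handle at both endpoints (with strict inequality since $m>0$). Your route buys a cleaner and more mechanical coverage argument at the cost of a slightly heavier elimination computation; the paper's route keeps the parametrization by $t$, which it then reuses in the remark following the lemma (identifying which circles $\mathcal{C}_t$ actually touch the envelope via the sign of $g$) and in the proof of Lemma~\ref{lemma:backward}. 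Both are complete proofs of the stated lemma.
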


\begin{figure}[H]
\centering
	\includegraphics[width=8cm]{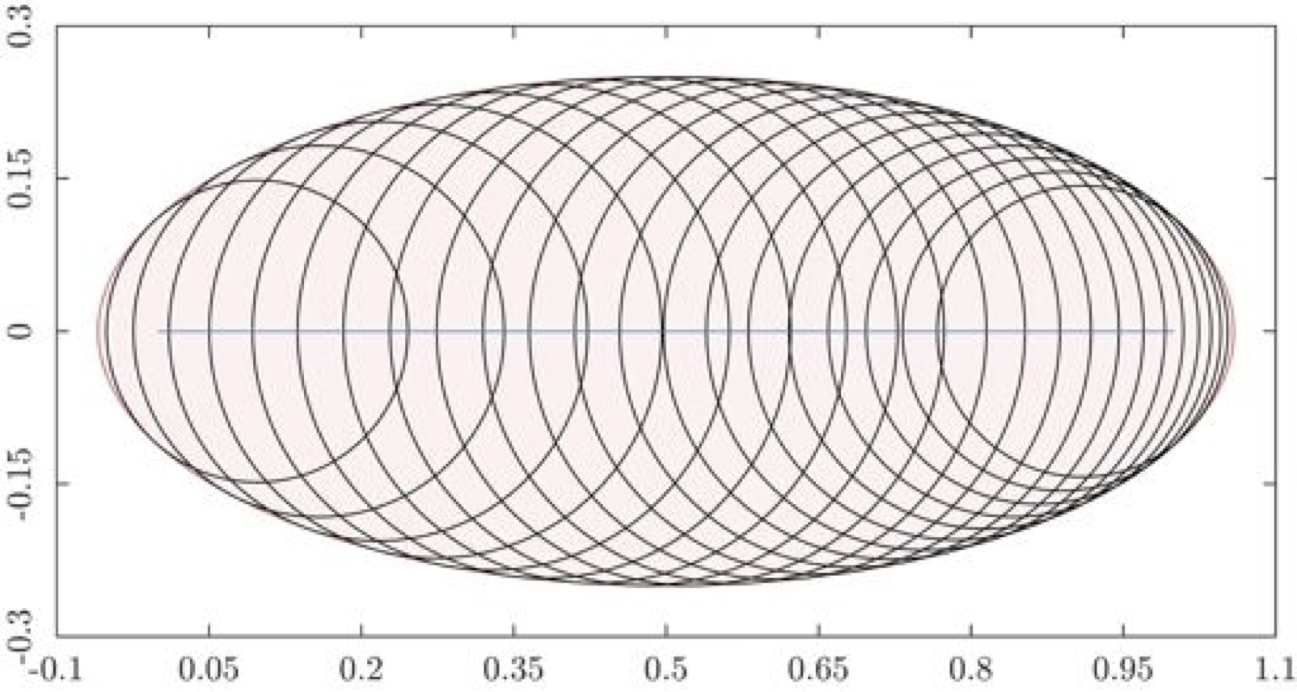}
\caption{\small A selection of circles $\mathcal{C}_t$ and the ellipse $\partial \mathcal{E}$ with $m=1.$}
\end{figure}

\begin{proof} First rewrite the circle equation from \eqref{eqn:circles2} as
\[F(x,y,t):=(x - (1 - t^2))^2+y^2-m^2t^2(1-t^2) = 0,\]
so  $\{ \mathcal{C}_t\}_{t\in[0,1]}$ is the family of curves satisfying $F(x,y,t)=0$.
 By definition, the discriminant envelope contains exactly the points $(x,y)$ that satisfy both $F(x,y,t)=0$ and $F_t(x,y,t)=0$ for some $t \in[0,1]$.  To find these points, compute $F_t(x,y,t)$ and observe that for $t=0$, $F_t(x,y,0)\equiv 0$ and $F(x,y,0)=0$ only at the point $(x,y)=(1,0)$. If $t\ne 0$, setting $F_t(x,y,t)=0$ and $F(x,y,t)=0$  gives the points $(x(t), y(t))$ with 
 \begin{equation}  \label{eqn:xy}
\begin{split}
x(t) &= (1 - t^2) + \tfrac{m^2}{2}(1 - 2 t^2),\\
y(t)  &= \pm \sqrt{ m^2 (t^2 - t^4) - \tfrac{m^4}{4}(1 - 2 t^2)^2}: = \pm \sqrt{g(t)}.
\end{split}
\end{equation}
We claim that on the subinterval(s) of $[0,1]$ where $y(t)$ makes sense, the points $(x(t), y(t))$ from  \eqref{eqn:xy} map out $\partial \mathcal{E}.$ To see this, observe that 
these $(x(t), y(t))$ satisfy
\[ \frac{(x(t) - \frac{1}{2})^2}{1 + m^2} + \frac{y(t)^2}{m^2} = \frac{(1 - 2t^2)^2}{4}(1 + m^2) + (t^2-t^4) - \frac{m^2}{4}(1 - 2t^2)^2 = \frac{1}{4}, \]
and so lie on $\partial \mathcal{E}.$

To see that $\partial \mathcal{E}$ equals the image of the functions from \eqref{eqn:xy}, observe 
that for $t$  sufficiently near $0$ or $1$, the value $g(t)$ in \eqref{eqn:xy} satisfies $g(t) <0$. As $g(\tfrac{1}{\sqrt{2}})>0$, there must be some $t_1, t_2$ satisfying $0 < t_1 < \frac{1}{\sqrt{2}} < t_2 <1$ with $g(t_1)=0=g(t_2).$ Then \eqref{eqn:xy} gives points  $(x(t_1), 0)$ and $(x(t_2), 0)$, where  $x(t_1) \ne x(t_2)$ because the function $x$ is strictly decreasing. As these points are also on $\partial \mathcal{E}$,  they must be the endpoints of the major axis: $(\tfrac{1}{2} \pm \tfrac{1}{2}\sqrt{1+m^2},0)$. As the functions in \eqref{eqn:xy} are continuous on $[t_1, t_2]$, it follows that $(x(t),  \sqrt{g(t)})$ maps out the top half of $\partial \mathcal{E}$ and $(x(t), -\sqrt{g(t)})$ maps out the bottom half of $\partial \mathcal{E},$ as needed.
\end{proof}

\begin{remark} As witnessed in the proof of Lemma \ref{lemma:family}, only some $\mathcal{C}_t$ contribute points to the envelope ellipse.  Analytically, circles corresponding to $t$-values sufficiently close to $0$ and $1$ do not contribute points because there are no solutions to the system of equations $F(x,y,t)=0$ and $F_t(x,y,t)=0$. Equivalently, the function $g(t)$ from \eqref{eqn:xy} is negative. If we let
 $t_1$ and $t_2$ denote the two unique solutions to $g(t)=0$ in $[0,1]$, then $(0,t_1)$ and $(t_2,1)$ are exactly the open intervals that do not contribute points to the envelope.
\begin{figure}[H]
\centering
	\includegraphics[width=8cm]{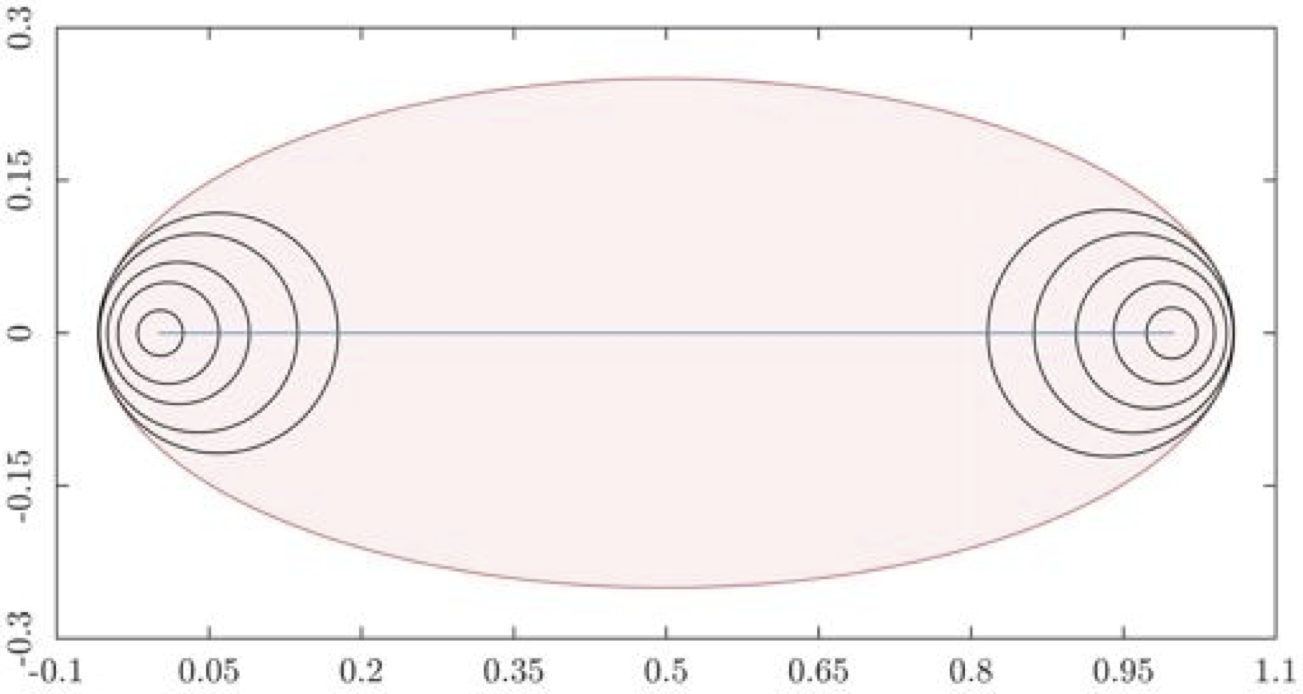}
\caption{\small Nested $\mathcal{C}_t$ that do not touch the envelope ellipse $\partial \mathcal{E}$.}
\end{figure}
Geometrically, the condition $g(t)<0$ means that the circle $\mathcal{C}_t$ does not intersect other $\mathcal{C}_{\tilde{t}}$ for $\tilde{t}$ close to $t$.  Indeed, the intervals $(0,t_1)$ and $(t_2,1)$ are exactly the open intervals where nearby circles are nested. 
\end{remark}

If we knew that the envelope curve $\partial \mathcal{E}$ equaled the boundary of the set $\bigcup_{t\in[0,1]} \mathcal{C}_t$, then a simple argument would imply $\bigcup_{t\in[0,1]} \mathcal{C}_t = \overline{\mathcal{E}}$ and yield Theorem \ref{thm:E}.  However in general, the envelope of a family of curves need not equal its boundary. Thus, we need some additional work. In particular, we first prove one direction of Theorem \ref{thm:E} via the following lemma:

\begin{lemma}\label{lemma:forward}
	Each $\mathcal{C}_t$ is contained in $\overline{\mathcal{E}}.$
\end{lemma}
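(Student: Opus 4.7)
The plan is to parameterize $\mathcal{C}_t$ directly via the formulas in \eqref{eqn:(x,y)}: every point $(x,y)\in \mathcal{C}_t$ can be written, for some $\theta\in[0,2\pi]$, as
\[
x - \tfrac{1}{2} = (\tfrac{1}{2} - t^2) + mt\sqrt{1-t^2}\cos\theta, \qquad y = mt\sqrt{1-t^2}\sin\theta.
\]
Define $Q(\theta):= \frac{(x-1/2)^2}{1+m^2} + \frac{y^2}{m^2}$. Since membership in $\overline{\mathcal{E}}$ is exactly the inequality $Q(\theta)\le \tfrac14$, the proof reduces to verifying this inequality for all $\theta$ and all $t\in[0,1]$.

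For the degenerate cases $t=0$ and $t=1$, the ``circle'' $\mathcal{C}_t$ collapses to the single point $(1,0)$ or $(0,0)$, and a direct evaluation gives $Q = \frac{1}{4(1+m^2)} < \tfrac14$. For $t\in(0,1)$, the natural step is to write $u=\cos\theta$, use $\sin^2\theta = 1-u^2$, and expand. The result is a polynomial in $u$ of degree two whose leading coefficient works out to $-\frac{t^2(1-t^2)}{1+m^2}<0$, so $Q$ is a strictly concave-down parabola in $u$. Hence the unconstrained maximum over $u\in\mathbb{R}$ dominates the constrained maximum over $u\in[-1,1]$, and it suffices to bound the unconstrained max.

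Setting $\tfrac{d}{du}[(1+m^2)Q(u)]=0$ locates the critical point $u^* = \frac{m(\tfrac{1}{2}-t^2)}{t\sqrt{1-t^2}}$. Substituting back, the $m^0$, $m^2$, and $m^4$ contributions from the three resulting terms regroup using $1+2m^2+m^4=(1+m^2)^2$, and after this telescoping the remaining expression simplifies to $Q(u^*) = (\tfrac{1}{2}-t^2)^2 + t^2(1-t^2) = \tfrac14$. This gives $Q(\theta)\le \tfrac14$ on all of $\mathcal{C}_t$, proving $\mathcal{C}_t\subseteq\overline{\mathcal{E}}$.

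The only real obstacle is algebraic: tracking the cancellations cleanly so that the upper bound collapses to exactly $\tfrac14$ and not merely to some $t$-dependent quantity. A consistency check supports the computation: the critical value of $u^*$ corresponds precisely to the tangency points already identified in the proof of Lemma \ref{lemma:family}, and $Q(u^*)=\tfrac14$ reflects exactly that $\mathcal{C}_t$ touches $\partial\mathcal{E}$ at those points (when $|u^*|\le 1$) and otherwise lies strictly inside $\overline{\mathcal{E}}$.
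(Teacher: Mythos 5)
Your proof is correct, and it takes a cleaner route than the paper's. Both arguments reduce the lemma to verifying that $L=\frac{(x-1/2)^2}{1+m^2}+\frac{y^2}{m^2}\le\frac{1}{4}$ on the parameterization \eqref{eqn:(x,y)}, but you optimize in the opposite order. The paper fixes $\theta$ and maximizes over $t\in[0,1]$ by calculus, which forces it to solve a critical-point equation with two branches for $\cos\theta$ and to treat $t=0,1,\tfrac{1}{\sqrt{2}}$ separately. You fix $t$ and maximize over $u=\cos\theta$: writing $a=\tfrac{1}{2}-t^2$ and $s=t\sqrt{1-t^2}$, one gets $(1+m^2)L=-s^2u^2+2amsu+a^2+(1+m^2)s^2$, a downward parabola in $u$ for $t\in(0,1)$ whose unconstrained maximum is $(1+m^2)(a^2+s^2)=(1+m^2)(\tfrac{1}{4})$, so $L\le\tfrac{1}{4}$ with no case analysis beyond the degenerate endpoints $t=0,1$ (where $\mathcal{C}_t$ is a point and $L=\frac{1}{4(1+m^2)}$), which you handle. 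Your approach buys a one-line algebraic identity in place of the paper's branching computation, and your consistency check is genuinely illuminating: the condition $|u^*|\le 1$ is exactly the condition $g(t)\ge 0$ from Lemma \ref{lemma:family}, so the circles that touch $\partial\mathcal{E}$ are precisely those for which the maximizing $\theta$ is attainable, and the remaining circles lie strictly inside $\overline{\mathcal{E}}$.
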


\begin{proof} We show that every point $(x,y)$ from a $\mathcal{C}_t$ satisfies \eqref{eqn:ED}.  Fix $\theta \in [0, 2\pi]$ and let $L_{\theta}$ denote the left side of \eqref{eqn:ED}.
Then using the expressions for $x$ and $y$ from \eqref{eqn:(x,y)} yields
\begin{equation} \label{En}
 L_\theta(t) = \frac{\frac{1}{4}+(1-2t^2)m\cos(\theta)t\sqrt{1-t^2}+m^2(t^2-t^4)-\cos^2(\theta)(t^2-t^4)}{m^2+1}.
\end{equation}
We will show that $L_\theta(t)\leq \frac{1}{4}$ for $t\in [0,1]$. To this end, taking the derivative of $L_\theta$ yields
\begin{multline} \label{dEn/dt}
L'_\theta(t) = \tfrac{1}{\sqrt{1-t^2}(m^2+1)}\big[ -2t\sqrt{1-t^2}(1-2t^2)\cos^2(\theta) \\+ m(8t^4-8t^2+1)\cos(\theta)+2tm^2\sqrt{1-t^2}(1-2t^2) \big].
\end{multline}
To find the extrema, we set $L'_\theta(t)=0$ and solve to get two distinct values of $\cos(\theta)$
\begin{equation*}
\cos(\theta) = 
\begin{dcases}
\frac{2mt\sqrt{1-t^2}}{(2t^2-1)};\\
\frac{m(1-2t^2)}{2t\sqrt{1-t^2}}.\\
\end{dcases}
\end{equation*}
These formulas require that $t\not\in \{\tfrac{1}{\sqrt{2}}, 0, 1\}$, so we consider those values, which include the interval endpoints, later.  First, for $ \cos(\theta) = \frac{2mt\sqrt{1-t^2}}{(2t^2-1)}$, we have
\[
L_\theta(t) = \frac{4m^2t^4 - 4m^2t^2 +4t^4-4t^2+1}{4(2t^2-1)^2(m^2+1)}
=\frac{-\cos^2(\theta)+1}{4(m^2+1)} \leq \frac{1}{4}.
\]
Similarly for $ \cos(\theta) = \frac{m(1-2t^2)}{2t\sqrt{1-t^2}}$, we have 
$L_
\theta(t) = \frac{1}{4}.$ Finally, for $t=0, 1, \frac{1}{\sqrt{2}}$, \eqref{En} gives
$$
L_{\theta}(0) = L_{\theta}(1) = \tfrac{1}{4(m^2+1)} \leq \tfrac{1}{4} \ \ \text{ and } \ \
L_{\theta}\left(\tfrac{1}{\sqrt{2}}\right) = \tfrac{m^2-\cos^2(\theta)+1}{4(m^2+1)} \leq \tfrac{1}{4}.
$$
Since $L_\theta(t)\leq \frac{1}{4}$ for all $\theta \in [0, 2\pi]$ and $t\in [0,1]$, every $\mathcal{C}_t \subseteq \overline{\mathcal{E}}$, as needed.\end{proof}

We now complete the proof of Theorem \ref{thm:E} via the following lemma:

\begin{lemma}\label{lemma:backward}
The elliptical disk $\overline{\mathcal{E}}$ is contained in $\bigcup_{t\in[0,1]} \mathcal{C}_t.$
\end{lemma}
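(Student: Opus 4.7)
The plan is to turn the membership question ``$(x,y) \in \mathcal{C}_t$ for some $t \in [0,1]$'' into a quadratic equation in a single real variable and to show that this quadratic always has a root in the desired range whenever $(x,y) \in \overline{\mathcal{E}}$. Substituting $s = 1-t^2$ into \eqref{eqn:circles2} and expanding yields
\[
(1+m^2)\, s^2 - (2x + m^2)\, s + (x^2 + y^2) = 0,
\]
and since the map $t \mapsto 1-t^2$ sends $[0,1]$ onto $[0,1]$, it suffices to show this quadratic admits a root $s \in [0,1]$ whenever $(x,y) \in \overline{\mathcal{E}}$; the required $t$ is then $\sqrt{1-s}$.

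The first step is a direct discriminant computation. Expanding
\[
\Delta = (2x+m^2)^2 - 4(1+m^2)(x^2+y^2)
\]
and collecting terms should give
\[
\Delta = m^2(1+m^2) - 4m^2\bigl(x-\tfrac{1}{2}\bigr)^2 - 4(1+m^2)\, y^2,
\]
which is exactly $4m^2(1+m^2)$ times the ``slack'' in inequality \eqref{eqn:ED}. Hence $\Delta \ge 0$ is equivalent to $(x,y) \in \overline{\mathcal{E}}$, so on the set where we need a solution, real roots $s_1, s_2$ exist automatically.

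The second step is to localize $s_1, s_2$ in $[0,1]$ via Vieta's formulas. The product $s_1 s_2 = \tfrac{x^2+y^2}{1+m^2}$ is nonnegative, and because the leftmost point of $\overline{\mathcal{E}}$ has $x \ge \tfrac{1}{2} - \tfrac{1}{2}\sqrt{1+m^2}$, the elementary inequality $\sqrt{1+m^2} \le 1+m^2$ gives $x \ge -m^2/2$ on $\overline{\mathcal{E}}$, so $s_1 + s_2 = \tfrac{2x+m^2}{1+m^2} \ge 0$ and both roots are nonnegative. Symmetrically, $(1-s_1)(1-s_2) = \tfrac{(1-x)^2 + y^2}{1+m^2} \ge 0$, and the rightmost point of $\overline{\mathcal{E}}$ satisfies $x \le \tfrac{1}{2} + \tfrac{1}{2}\sqrt{1+m^2} \le 1 + m^2/2$ (again from $\sqrt{1+m^2} \le 1+m^2$), so $(1-s_1) + (1-s_2) = \tfrac{2 + m^2 - 2x}{1+m^2} \ge 0$. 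Together these force at least one root into $[0,1]$, producing the required $t$.

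The only real obstacle is the algebraic identification of $\Delta$ with a scaled version of the ellipse inequality; the remaining root-location work is bookkeeping with Vieta's formulas, and the two $x$-range bounds on $\overline{\mathcal{E}}$ both reduce to a single one-line inequality. I would also note in passing that this argument produces, for each interior point, two distinct values of $t$, which is geometrically consistent with the fact that generic points inside the region lie on two circles from the family, while the envelope points lie on just one.
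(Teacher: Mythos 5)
Your proposal is correct, and it takes a genuinely different route from the paper. The paper's proof of this lemma leans on Lemma~\ref{lemma:family}: it first cites that result to get $\partial\mathcal{E}\subseteq\bigcup_t\mathcal{C}_t$, and then, for an interior point $(x_0,y_0)$, runs an intermediate value argument on $G(t)=F(x_0,y_0,t)$ between a parameter $t_0$ (chosen via the envelope parameterization \eqref{eqn:xy} so that $(x_0,y_0)\in\mathcal{D}_{t_0}$, giving $G(t_0)<0$) and $t=1$ (where $G(1)>0$ away from the origin). Your substitution $s=1-t^2$ replaces all of this with the quadratic $(1+m^2)s^2-(2x+m^2)s+(x^2+y^2)=0$; I verified that its discriminant is exactly $m^2(1+m^2)-4m^2(x-\tfrac12)^2-4(1+m^2)y^2$, i.e.\ $4m^2(1+m^2)$ times the slack in \eqref{eqn:ED}, and your Vieta bookkeeping (in fact it places \emph{both} roots in $[0,1]$, not just one) is sound, as are the two $x$-range bounds on $\overline{\mathcal{E}}$. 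What your approach buys: it is self-contained (no appeal to Lemma~\ref{lemma:family}), it treats boundary and interior points uniformly, and --- since ``$\Delta\ge 0$'' is \emph{equivalent} to membership in $\overline{\mathcal{E}}$ --- the same computation immediately gives the reverse containment of Lemma~\ref{lemma:forward} for free (a point of $\mathcal{C}_t$ yields a real root $s$, hence $\Delta\ge 0$, hence the point lies in $\overline{\mathcal{E}}$), bypassing the calculus optimization there. What the paper's route buys instead is the geometric picture: it exhibits explicitly which circle an interior point lies on relative to the envelope, which is the viewpoint the rest of Section~\ref{sec:circles} generalizes.
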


\begin{proof} In what follows, let $\mathcal{D}_t$ denote the open disk bounded by the circle $\mathcal{C}_t$. First by Lemma \ref{lemma:family}, $\partial \mathcal{E} \subseteq \bigcup_{t\in[0,1]} \mathcal{C}_t$.    Now let $(x_0, y_0) \in \mathcal{E}$ and for $t\in [0,1]$, define
\[ G(t): = F(x_0, y_0,t ) = (x_0 - (1 - t^2))^2+y_0^2-m^2t^2(1-t^2) .\]
 By the formulas in \eqref{eqn:xy}, there is a circle $\mathcal{C}_{t_0}$ that touches both points of $\partial \mathcal{E}$ whose $x$-coordinate is $x_0$. Then $(x_0, y_0) \in \mathcal{D}_{t_0}$ and so $G(t_0)<0$. If $(x_0,y_0) = (0,0)$, then $(x_0,y_0) \in \mathcal{C}_1$. Otherwise, $G(1) >0$. Then by continuity, there is some $\tilde{t} \in [t_0, 1]$ so that $G(\tilde{t}) =0$. This implies $(x_0, y_0) \in \mathcal{C}_{\tilde{t}}$ and so $\overline{\mathcal{E}}\subseteq \bigcup_{t\in[0,1]} \mathcal{C}_t$.
\end{proof}

\section{Families of Intersecting Circles} \label{sec:circles}
Now we consider more general families of intersecting circles. Specifically, given functions $x_c(t), y_c(t), r(t)$ for $t\in [s_1, s_2]$, let
\begin{equation} \label{eqn:circles} F(x,y,t) = (x-x_c(t))^2 +(y-y_c(t))^2 - r(t)^2.\end{equation}
For a fixed $t$, let $\mathcal{C}_t$  denote the circle defined by $F(x,y,t)=0$ and let $\mathcal{D}_t$  denote the open disk with boundary $\mathcal{C}_t$.
Finally, let $\mathcal{F}$ be the family of circles $\mathcal{C}_t$  for  $t \in [s_1, s_2]$. 

As before,  let $E_2$ denote the limiting-position envelope of $\mathcal{F}$ and let $E_3$ denote the discriminant envelope of $\mathcal{F}$. 
Then we have the following result relating the boundary of $\cup_t \mathcal{D}_t$ to these envelopes:

\begin{theorem} \label{thm:circle}Let $\mathcal{F}$ denote a family of circles parameterized by  $t \in [s_1, s_2]$ as in \eqref{eqn:circles}. Assume $x_c, y_c, r \in C^2([s_1, s_2])$ and 
for $t\in (s_1, s_2)$, $r(t) >0$ and 
\[ r'(t)^2 < x'_c(t)^2 + y_c'(t)^2.\]
Let $\Omega = \bigcup_{t\in[s_1,s_2]} \mathcal{D}_t.$ Then $\partial \Omega \subseteq E_2 \cup \mathcal{C}_{s_1} \cup \mathcal{C}_{s_2} \subseteq E_3 \cup \mathcal{C}_{s_1} \cup \mathcal{C}_{s_2}.$ \end{theorem}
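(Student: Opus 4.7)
The plan is to combine a one-dimensional minimization argument with a line-circle intersection argument that leverages the hypothesis on the radii.

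First, since $\Omega$ is open, $\partial\Omega=\overline{\Omega}\setminus\Omega$. Using compactness of $[s_1,s_2]$ and continuity of $F$, every limit of points of $\Omega$ lies in some $\overline{\mathcal{D}_{t^*}}$, so $\overline{\Omega}=\bigcup_{t\in[s_1,s_2]}\overline{\mathcal{D}_t}$, and any $p\in\partial\Omega$ lies on some $\mathcal{C}_{t^*}$. If $t^*\in\{s_1,s_2\}$, we are done, so assume $t^*\in(s_1,s_2)$. Because $p\notin\Omega$, the function $g(t):=F(p,t)$ is nonnegative on $[s_1,s_2]$ and vanishes at the interior point $t^*$, making $t^*$ an interior minimum. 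Thus $g'(t^*)=F_t(p,t^*)=0$, which places $p\in E_3$ and yields the second inclusion $\partial\Omega\subseteq E_3\cup\mathcal{C}_{s_1}\cup\mathcal{C}_{s_2}$.

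To upgrade this to the $E_2$ inclusion, the strategy is to force nearby circles to intersect transversally in two points and then extract a limit converging to $p$. For small $h>0$, Taylor expansion yields that the distance between the centers of $\mathcal{C}_{t^*-h}$ and $\mathcal{C}_{t^*+h}$ is $2h\sqrt{x_c'(t^*)^2+y_c'(t^*)^2}+O(h^2)$, the absolute difference of their radii is $2h|r'(t^*)|+O(h^2)$, and their sum is $2r(t^*)+O(h^2)>0$. The strict inequality $r'(t^*)^2<x_c'(t^*)^2+y_c'(t^*)^2$ ensures that for all sufficiently small $h$, the distance between the centers lies strictly between the difference and the sum of the radii, so the two circles meet in exactly two points. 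Any such intersection point satisfies $F(x,y,t^*+h)=F(x,y,t^*-h)$; the quadratic terms cancel, so this is a linear equation defining a line $\ell_h$. Dividing by $2h$, the coefficients of $\ell_h$ converge to those of $F_t(\cdot,t^*)$, so $\ell_h\to\ell_0:=\{F_t(x,y,t^*)=0\}$, which passes through $p$ because $F_t(p,t^*)=0$.

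The two intersection points of $\mathcal{C}_{t^*-h}\cap\mathcal{C}_{t^*+h}$ depend continuously on $h$ (by the explicit formula for the intersection of two circles, or by reducing to a quadratic along $\ell_h$) and therefore converge, as $h\to 0$, into $\ell_0\cap\mathcal{C}_{t^*}$, a set containing at most two points one of which is $p$. The main obstacle I anticipate is verifying that $p$ itself (and not only a possibly distinct second intersection $p'$ of $\ell_0$ with $\mathcal{C}_{t^*}$) actually arises as a limit. In the tangent case $\ell_0\cap\mathcal{C}_{t^*}=\{p\}$, both intersection points collapse to $p$ automatically. In the transverse case, continuity of the two solution branches forces one to converge to $p$ and the other to $p'$, since for small $h$ the two intersection points are close to distinct limits. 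Either way, $p\in E_2$, completing the stronger inclusion.
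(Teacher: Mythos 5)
Your proof is correct, but it reaches the conclusion by a genuinely different route than the paper. The paper, after the same compactness step showing $\partial\Omega\subseteq\bigcup_t\mathcal{C}_t$, fixes an interior circle $\mathcal{C}_{t^*}$, computes the intersection points of $\mathcal{C}_{t^*}$ with nearby $\mathcal{C}_t$ explicitly, identifies their limits $P_1\ne P_2$, and then runs a careful arc-covering argument (tracking auxiliary points $Q_1,Q_2$ on the diameter in the direction $(x_c'(t^*),y_c'(t^*))$) to show that every point of $\mathcal{C}_{t^*}\setminus\{P_1,P_2\}$ lies in some nearby open disk and hence is interior to $\Omega$; only then does it conclude $\partial\Omega\cap\mathcal{C}_{t^*}\subseteq\{P_1,P_2\}\subseteq E_2$. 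You instead observe that $g(t)=F(p,t)$ is nonnegative with an interior zero at $t^*$, so $F_t(p,t^*)=0$ by the first-derivative test --- an elegant shortcut that lands $p$ in $E_3$ without any covering argument and without citing Bruce--Giblin --- and then you upgrade to $E_2$ by noting that the intersection points of $\mathcal{C}_{t^*\pm h}$ lie on the radical line $\ell_h$, that $\ell_h\to\ell_0=\{F_t(\cdot,t^*)=0\}$, and that $p\in\ell_0\cap\mathcal{C}_{t^*}$. Your worry about the transverse case is unfounded in a good way: under the hypothesis $r'(t^*)^2<x_c'(t^*)^2+y_c'(t^*)^2$, the distance from the center of $\mathcal{C}_{t^*}$ to $\ell_0$ is $r(t^*)\,|r'(t^*)|/\sqrt{x_c'(t^*)^2+y_c'(t^*)^2}<r(t^*)$, so $\ell_0$ always meets $\mathcal{C}_{t^*}$ transversally in exactly two points, the tangent case never occurs, and the two intersection points of $\mathcal{C}_{t^*-h}\cap\mathcal{C}_{t^*+h}$ stay uniformly separated (their separation tends to the positive chord length), forcing the two branches to converge to the two distinct points of $\ell_0\cap\mathcal{C}_{t^*}$, one of which is $p$. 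Your route also recovers the refinement the paper needs later (each interior circle contributes at most the two points of $\ell_0\cap\mathcal{C}_{t^*}$ to $\partial\Omega$, which are exactly the paper's $P_1,P_2$ of Remark \ref{rem:formulas}), so nothing is lost. Two cosmetic remarks: the second inclusion in the statement is literally $E_2\subseteq E_3$, which your argument does not prove as a set inclusion (the paper cites \cite[Proposition 1]{bruce} for it), though your interior-minimum step makes it unnecessary for the containment of $\partial\Omega$; and when parameterizing $\ell_h$ to track the two solution branches you should normalize by $2h$ first so that the line coefficients converge to a nondegenerate limit, which is guaranteed because the gradient of $F_t(\cdot,t^*)$ in $(x,y)$ is $(-2x_c'(t^*),-2y_c'(t^*))\ne(0,0)$.
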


To study the limiting-position envelope of a family of circles, we need the following information about intersections of circles. 

\begin{remark} \label{rem:intersection} Let $C_M$ be a circle with center $(x_M, y_M)$ and radius $r_M$ and $C_N$ a different circle with center $(x_N, y_N)$ and radius $r_N$. Then $C_M$ and $C_N$ intersect precisely when
\[ (r_M-r_N)^2 \le (x_M-x_N)^2+(y_M-y_N)^2 \le (r_M + r_N)^2.\]
If equality happens in this expression, i.e. if 
\[  (x_M-x_N)^2+(y_M-y_N)^2 = (r_M \pm r_N)^2,\]
 then the two circles are tangent. If $(x_M, y_M)$ is inside $C_N$, then the two circles are internally tangent. If $(x_M, y_M)$ is not inside $C_N$, then the $+$ indicates internally tangent circles and the $-$ indicates externally tangent circles. 

If the inequality is strict, there are two intersection points.
To find them, define
\[ 
\begin{aligned}
d &= \sqrt{ (x_M-x_N)^2 +(y_M-y_N)^2}, \\
K &= \tfrac{1}{4} \sqrt{ \left( (r_M +r_N)^2-d^2\right) \left(d^2- (r_M -r_N)^2\right)}.
\end{aligned}
\]
Then the points of intersection $(x_1, y_1)$ and $(x_2, y_2)$ of $C_M$ and $C_N$ are given by
\[ 
\begin{aligned}
x_j &= \tfrac{1}{2}(x_M +x_N) + \tfrac{1}{2d^2}(x_N-x_M)(r_M^2-r_N^2) +2(y_N-y_M)(-1)^{j+1} \tfrac{K}{d^2},\\
y_j &= \tfrac{1}{2}(y_M+y_N) + \tfrac{1}{2d^2}(y_N-y_M)(r_M^2-r_N^2) +2(x_N-x_M)(-1)^j \tfrac{K}{d^2},
\end{aligned}
\]
for $j=1,2.$
\end{remark}

We turn to the proof of Theorem \ref{thm:circle}:

\begin{proof}[Proof of Theorem \ref{thm:circle}.]  As $F$ and $F_t$ are continuous, \cite[Proposition 1]{bruce} implies that $E_2\subseteq E_3$ and so, we need only show that the $\partial \Omega \subseteq E_2 \cup \mathcal{C}_{s_1} \cup \mathcal{C}_{s_2}.$ \\

\noindent \textbf{Step 1: Show $\partial \Omega \subseteq \cup_t \mathcal{C}_t$}. We first observe that if $p \in \partial \Omega,$ then $p$ is in some $\mathcal{C}_t$. To see this, observe that if $p=(\tilde{x}, \tilde{y})\in \partial \Omega,$ then there is a sequence $(p_n) \subseteq \Omega$ converging to $p$. Then each $p_n=(x_n,y_n) \in \mathcal{D}_{t_n}$ for some $t_n \in [s_1, s_2]$ and by passing to a subsequence, we can assume that $(t_n)$ converges to some $\tilde{t}\in [s_1, s_2]$. As
\[  (x_n-x_c(t_n))^2 +(y_n-y_c(t_n))^2 < r(t_n)^2\]
for each $n$, we can let $n\rightarrow \infty$ and use continuity to conclude that 
\[  (\tilde{x}-x_c(\tilde{t}))^2 +(\tilde{y}-y_c(\tilde{t}))^2 \le r(\tilde{t})^2.\]
Thus, $p \in \overline{\mathcal{D}}_{\tilde{t}} \cap \partial \Omega$, which implies  $p \in \mathcal{C}_{\tilde{t}}$. \\

In the remainder of the proof, we will show that for each $t\in (s_1, s_2)$, the circle $\mathcal{C}_t$ contributes at most two points to $\partial \Omega$ and those points are both in $E_2.$ Once we establish this, it is immediate that $\partial \Omega \subseteq  \cup_t (\partial \Omega\cap \mathcal{C}_t) \subseteq E_2 \cup \mathcal{C}_{s_1} \cup \mathcal{C}_{s_2}.$
As establishing the result about $\mathcal{C}_t$ is a local argument, we will now (and throughout the rest of the proof) fix $t \in (s_1, s_2)$, assume this fixed $t$-value is $0$, and prove that $\mathcal{C}_0$ has the desired properties. \\

\noindent \textbf{Step 2: Find circle intersection points and their limits}. 
Let $(\hat{x}, \hat{y}):=(x_c(0), y_c(0))$ and $\hat{r}:=r(0).$  Our smoothness assumptions paired with Taylor's Theorem imply that for $t$ sufficiently close to $0$, there are $A,B,C\in \mathbb{R}$, equal to $x'(0), y'(0), r'(0)$ respectively with
\[ 
x_c(t) =  \hat{x} + At + O(t^2), \ \ \ y_c(t) =  \hat{y} + Bt + O(t^2), \ \ \
r(t) =  \hat{r} + Ct + O(t^2).
\]
Set $D = \sqrt{A^2 +B^2}.$ The assumptions that $C^2 < D^2$ and $\hat{r}>0$ imply that for $t$ sufficiently small, 
\[ (\hat{r} - r(t))^2 < (x_c(t) - \hat{x})^2 + (y_c(t)- \hat{y})^2 < (\hat{r} + r(t))^2,\]
so the circles  $\mathcal{C}_t$ and $\mathcal{C}_0$ intersect in two distinct points. Then Remark \ref{rem:intersection} implies that the intersection points $P_1^t=(x_1^t, y_1^t)$ and $P_2^t=(x_2^t, y_2^t)$ satisfy the formulas
\[ 
\begin{aligned}
x^t_j &= \hat{x} + \tfrac{1}{2}At + O(t^2) - \tfrac{AC\hat{r} +O(t) }{D^2 + O(t)} +\hat{r}(-1)^{j+1} \tfrac{|t|}{t}(B +O(t)) \tfrac{\sqrt{D^2-C^2 +O(t)}}{D^2 + O(t)},\\
y^t_j &= \hat{y} + \tfrac{1}{2}Bt + O(t^2) - \tfrac{BC\hat{r} +O(t) }{D^2 + O(t)} +\hat{r}(-1)^j \tfrac{|t|}{t}(A +O(t)) \tfrac{\sqrt{  D^2-C^2 +O(t)}}{D^2 + O(t)},
\end{aligned}
\]
for $j=1,2$. Define the points $P_j=(x_j^p, y_j^p)$ by 
\[
\begin{aligned}
x_j^p &= \hat{x} +\tfrac{\hat{r}}{D^2}\left(- AC  +(-1)^{j+1} B\sqrt{D^2-C^2}\right),\\
 y_j^p &= \hat{y} +\tfrac{\hat{r}}{D^2}\left( -BC +(-1)^jA\sqrt{D^2-C^2}\right). 
\end{aligned}
\]
As $\hat{r}\ne0$ and $A, B$ are not simultaneously $0$, it is clear that $P_1 \ne P_2$. Furthermore, if $t >0$ and $t \searrow 0$, one can see that $(x_1^t, y_1^t) \rightarrow P_1$ and $(x_2^t, y_2^t) \rightarrow P_2$. Similarly, if $t<0$ and $t \nearrow 0$, one can see that $(x_1^t, y_1^t) \rightarrow P_2$ and $(x_2^t, y_2^t) \rightarrow P_1$. This implies that $P_1, P_2$ are in  $E_2$ and as they are limits of points on $\mathcal{C}_0$, they are also on $\mathcal{C}_0$.
\\

\noindent \textbf{Step 3: Identify arcs from $\mathcal{C}_0$ contained in other $\mathcal{D}_t$}. 
By Step $2$, for each $t$ near $0$, the open disk $\mathcal{D}_t$ contains an open arc $\mathcal{I}_t \subseteq \mathcal{C}_0$ with endpoints given by the intersection points $P_1^t$ and $P_2^t$. In general, there are two choices for $\mathcal{I}_t$. We will uniquely identify this arc by specifying points that it does and does not contain.
 
First consider the diameter of $\mathcal{C}_0$ that lies along the line $(\hat{x}, \hat{y}) + s(A,B)$ parametrized by $s$. Then the endpoints of this diameter are given by
\[ 
Q_1 = (\hat{x}, \hat{y}) + \tfrac{\hat{r}}{D}(A, B) \ \ \text{ and } \ \ 
Q_2 = (\hat{x}, \hat{y}) - \tfrac{\hat{r}}{D}(A, B). 
\]
We claim that for $t>0$ sufficiently small, $Q_1 \in \mathcal{D}_t$ and $Q_2 \not \in \mathcal{D}_t$. First observe that
\[ 
\begin{aligned}
\left | (x_c(t), y_c(t)) - Q_1 \right|^2 &= \left( At -  \tfrac{\hat{r}A}{D} +O(t^2)\right)^2 + \left( Bt -  \tfrac{\hat{r}B}{D} +O(t^2)\right)^2 \\
&= \hat{r}^2 -2 t\hat{r}D +O(t^2), 
\end{aligned}
\]
where $\hat{r}$ is the radius of $\mathcal{C}_0.$
Then as $t >0$ and $-C\le |C|< D$, we have
\[ \left | (x_c(t), y_c(t)) - Q_1 \right|^2 - r(t)^2 = -2t\hat{r}\left( D +C\right) +O(t^2) <0,\]
for $t$ sufficiently small. Thus  $Q_1 \in \mathcal{D}_t.$ Similarly, 
\[ 
\left | (x_c(t), y_c(t)) - Q_2 \right| - r(t)^2 = 2t\hat{r} \left( D -C\right) +O(t^2) >0,
\]
which implies $Q_2 \not \in \mathcal{D}_t$ for $t$ sufficiently small. Identical arguments show that  for small $t<0$, $Q_2 \in \mathcal{D}_t$ and $Q_1 \not \in \mathcal{D}_t.$

Thus, if $t$ is small and positive, then $\mathcal{D}_t$ contains an open arc $\mathcal{I}_t \subseteq \mathcal{C}_0$ with endpoints  $P_1^t$ and $P_2^t$ that contains $Q_1$ and not $Q_2$. If $t$ is small and negative, then $\mathcal{D}_t$ contains an open arc $\mathcal{I}_t \subseteq \mathcal{C}_0$ with endpoints  $P_1^t$ and $P_2^t$ that contains $Q_2$ and not $Q_1$. 

Finally, note that $\{P_1, P_2\} \ne \{Q_1, Q_2\}$. Indeed if one sets $P_1=Q_1$ and $P_2=Q_2,$ the resulting formulas imply that $A=B=C=0$, a contradiction. The same contradiction arises if one sets $P_1=Q_2$ and $P_2=Q_1.$ Thus without loss of generality, we can assume $ Q_1\not \in \{P_1, P_2\}$. 

\begin{figure}[H]
	\centering
	\includegraphics[width=8cm]{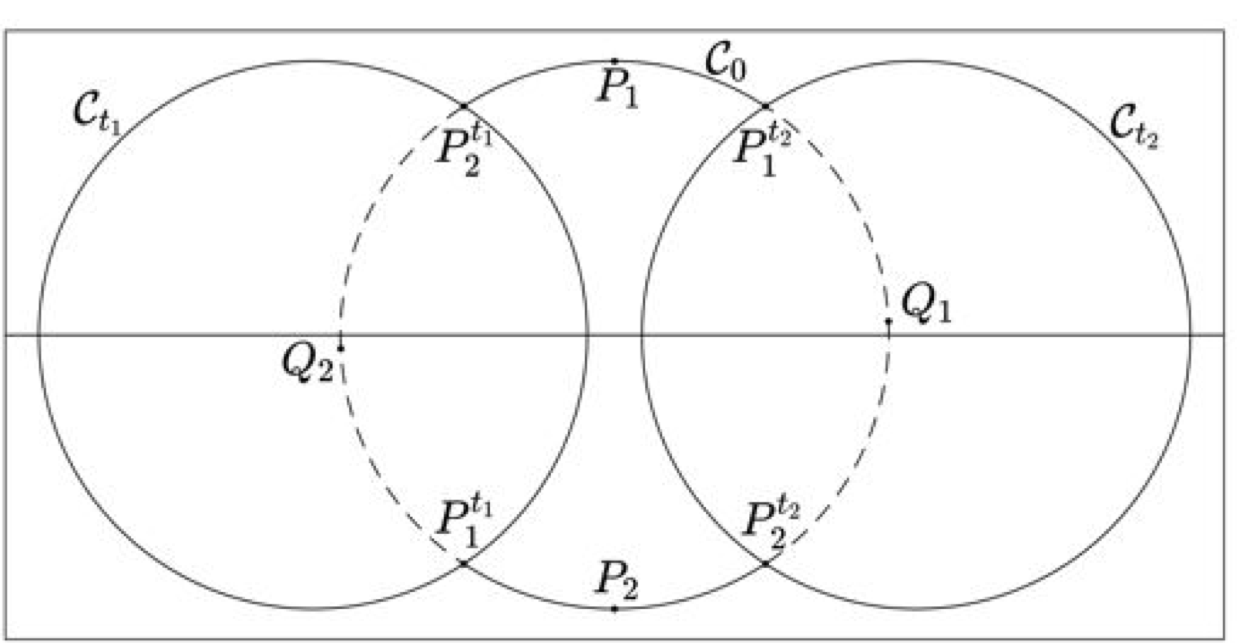}
\caption{\small Example of circles $\mathcal{C}_{t_1}, \mathcal{C}_0, \mathcal{C}_{t_2}$ with $t_1<0<t_2$ and the points $P_1, P_2, Q_1,Q_2.$ }
\end{figure}

\noindent \textbf{Step 4: Show $\partial \Omega \cap \mathcal{C}_0 \subseteq \{P_1,P_2\}$.} 
Let $\mathcal{I}_{+}, \mathcal{I}_-$ denote the two distinct open arcs of $\mathcal{C}_0$ with endpoints $P_1$ and $P_2$, chosen so $Q_1 \in \mathcal{I}_+$. For $p, q \in\mathcal{C}_0$, let $|p-q |_{0}$ denote the length of the shortest arc between them on $\mathcal{C}_0$. 
  We show that $\left( \mathcal{I}_+\cup \mathcal{I}_- \right) \cap \partial \Omega = \emptyset.$ 
  
Fix $p \in \mathcal{I}_+$. We claim $p \in \mathcal{D}_t$ for some $t>0$. Let $\epsilon = \min_j \{|p-P_j|_0, |Q_1-P_j|_0\} >0$. By Step $2$, we can choose $t>0$ sufficiently small  so that  $\min_j |P_j^t- P_j|_0 <\frac{\epsilon}{3}.$ Then $p \in \mathcal{I}_t$, the open arc in $\mathcal{C}_0$ from $P_1^t$ to $P_2^t$ containing $Q_1$. 
By Step $3$, $\mathcal{I}_t \subset \mathcal{D}_t$ and so $p \not \in \partial \Omega.$

Similarly, each $p \in \mathcal{I}_-$ is in some $\mathcal{D}_t$ with $t<0$. To see this, set $\epsilon = \min_j \{ |p- P_j|_0, |Q_1-P_j|\} >0$ and choose 
$t<0$ sufficiently small so that   $\min\{ |P_1^t-P_2|, |P_2^t-P_1|\} <\frac{\epsilon}{3}.$ Then $p \in \mathcal{I}_t$, the open arc in $\mathcal{C}_0$ from $P_1^t$ to $P_2^t$ that does not contain $Q_1$. By Step $3$, $\mathcal{I}_t \subset \mathcal{D}_t$ and so $p \not \in \partial \Omega.$ This gives $\partial \Omega \cap \mathcal{C}_0 \subseteq \{P_1,P_2\}$, which completes the proof.
\end{proof}

\begin{remark} \label{rem:formulas} This argument also gives a parameterization of the points of $E_2$ that can contribute to $\partial \Omega.$ Specifically, the points in $E_2$ (except for those coming from $\mathcal{C}_{s_1}$ and $\mathcal{C}_{s_2})$ that can contribute to $\partial \Omega$ are exactly those of the form $(x_1(t), y_1(t))$, $(x_2(t), y_2(t))$ where
\[
\begin{aligned}
x_j(t) &= x_c(t) +\tfrac{r(t)}{x_c'(t)^2+y_c'(t)^2}\left(- x_c'(t)r'(t)  +(-1)^{j+1} y_c'(t)\sqrt{x_c'(t)^2+y_c'(t)^2-r_c'(t)^2}\right),\\
 y_j(t) &= y_c(t) +\tfrac{r(t)}{x_c'(t)^2+y_c'(t)^2}\left( -y_c'(t)r'(t) +(-1)^j x_c'(t)\sqrt{x_c'(t)^2+y_c'(t)^2-r_c'(t)^2}\right),
\end{aligned}
\]
 for $j=1,2$ and $t\in(s_1, s_2).$
\end{remark}

In the next two sections, we apply Theorem \ref{thm:circle} to families of pseudohyperbolic disks.

\section{Pseudohyperbolic Disks on a Line} \label{sec:line}

In this section, we fix $r\in (0,1)$ and consider the family of pseudohyperbolic disks $\{ D_\rho(t,r): t \in [-1,1]\}$, whose centers lie along a line segment. For each $t\in [-1,1]$, let $\overline{D_\rho(t,r)}$ denote the closure of $D_\rho(t,r)$ and $\mathcal{S}_t$  its boundary circle. Then by \eqref{eqn:cR}, $\mathcal{S}_t$ is defined by the equation
\begin{equation} \label{[eqn]:S_t}
\left(x - \frac{1-r^2}{1-r^2t^2}t\right)^2 + y^2 = \left(\frac{r(1-t^2)}{1-r^2t^2}\right)^2.
\end{equation}
 From \eqref{eqn:disks}, recall the disks 
\[\mathcal{D}_1=\{z \in \mathbb{C} :\left|z+ \tfrac{1-r^2}{2r}i \right| < R_r \} \ \ \text{ and }  \ \ \mathcal{D}_2=  \{z \in \mathbb{C} :\left |z-\tfrac{1-r^2}{2r}i \right| < R_r \},\]
with radius $R_r:=\frac{1+r^2}{2r}$ and from \eqref{eqn:arcs}, recall the arcs $\mathcal{A}_1= \partial \mathcal{D}_1 \cap \overline{\mathbb{H}^+}$ and $\mathcal{A}_2= \partial \mathcal{D}_2 \cap \overline{\mathbb{H}^-}.$
Then we show
\begin{theorem} \label{thm:bdy} The union $\bigcup_{t\in[-1,1]} D_\rho(t,r)$ has boundary $\mathcal{A}_1 \cup \mathcal{A}_2$.  \end{theorem}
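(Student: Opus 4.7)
The plan is to apply Theorem~\ref{thm:circle} to the family $\{\mathcal{S}_t\}_{t\in[-1,1]}$ from \eqref{[eqn]:S_t} to obtain the forward inclusion $\partial\Omega\subseteq\mathcal{A}_1\cup\mathcal{A}_2$, and then to establish the reverse inclusion by showing each $\mathcal{S}_t$ is internally tangent to both $\partial\mathcal{D}_1$ and $\partial\mathcal{D}_2$, which confines $\Omega$ to the open lens $\mathcal{D}_1\cap\mathcal{D}_2$.

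First I would verify the hypotheses of Theorem~\ref{thm:circle} for
\[
x_c(t)=\tfrac{(1-r^2)t}{1-r^2t^2},\qquad y_c(t)=0,\qquad r(t)=\tfrac{r(1-t^2)}{1-r^2t^2}.
\]
These functions are $C^2$ on $[-1,1]$ because $1-r^2t^2\geq 1-r^2>0$, and a straightforward differentiation followed by the identity $(1+r^2t^2)^2-4r^2t^2=(1-r^2t^2)^2$ gives $x_c'(t)^2-r'(t)^2=(1-r^2)^2/(1-r^2t^2)^2>0$ on $(-1,1)$, which is the required transversality. Since $\mathcal{S}_{\pm 1}$ degenerates to the single point $(\pm 1,0)$, Theorem~\ref{thm:circle} yields $\partial\Omega\subseteq E_2\cup\{(\pm 1,0)\}$.

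Next I would compute $E_2$ via Remark~\ref{rem:formulas} (equivalently, the envelope algorithm applied to $F(x,y,t)=(x-x_c(t))^2+y^2-r(t)^2$). Since $y_c'\equiv 0$, the formulas collapse to $x(t)=x_c(t)-r(t)r'(t)/x_c'(t)$ and $y(t)=\pm\, r(t)\sqrt{x_c'(t)^2-r'(t)^2}/x_c'(t)$. After clearing denominators using the identity above, the envelope reduces to
\[
x(t)=\frac{(1+r^2)t}{1+r^2t^2},\qquad y(t)=\pm\frac{r(1-t^2)}{1+r^2t^2},\qquad t\in(-1,1).
\]
A direct substitution shows $x(t)^2+y(t)^2+\tfrac{(1-r^2)y(t)}{r}=1$ for the ``$+$'' branch, which is exactly the equation of $\partial\mathcal{D}_1$. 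Since $y(t)\geq 0$ and $x(t)$ is a strictly increasing bijection $(-1,1)\to(-1,1)$, this branch parameterizes $\mathcal{A}_1\setminus\{(\pm 1,0)\}$; by symmetry the ``$-$'' branch parameterizes $\mathcal{A}_2\setminus\{(\pm 1,0)\}$. Combined with the degenerate endpoint points, $\partial\Omega\subseteq\mathcal{A}_1\cup\mathcal{A}_2$.

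For the reverse inclusion I would prove that each $\mathcal{S}_t$ with $t\in(-1,1)$ is internally tangent to both $\partial\mathcal{D}_1$ and $\partial\mathcal{D}_2$. Using $4r^2t^2+(1-r^2t^2)^2=(1+r^2t^2)^2$, the Euclidean distance between $(x_c(t),0)$ and the center $(0,-\tfrac{1-r^2}{2r})$ of $\mathcal{D}_1$ simplifies to $\tfrac{(1-r^2)(1+r^2t^2)}{2r(1-r^2t^2)}$, which is precisely $R_r-r(t)$; symmetry handles $\mathcal{D}_2$. This places each open disk $D_\rho(t,r)$ strictly inside the open lens $\mathcal{D}_1\cap\mathcal{D}_2$, whose boundary is exactly $\mathcal{A}_1\cup\mathcal{A}_2$. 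Consequently every envelope point $p=(x(t),y(t))$ lies on $\mathcal{S}_t\subseteq\overline{D_\rho(t,r)}\subseteq\overline{\Omega}$ but outside the open lens $\supseteq\Omega$, forcing $p\in\partial\Omega$; the corner points $(\pm 1,0)$ then lie in $\partial\Omega$ by closedness. The main obstacle is purely computational: organizing the rational-function algebra so that both the envelope parameterization and the internal-tangency distance collapse cleanly via the identity $4r^2t^2+(1-r^2t^2)^2=(1+r^2t^2)^2$, and verifying correctly that the two branches of $E_2$ match up with $\mathcal{A}_1$ and $\mathcal{A}_2$ respectively.
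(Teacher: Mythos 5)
Your proposal is correct, and on the forward inclusion it matches the paper almost exactly: you verify the hypotheses of Theorem~\ref{thm:circle} with the same computation $x_c'(t)^2-r'(t)^2=(1-r^2)^2/(1-r^2t^2)^2$ and land on the same envelope parameterization $\bigl(\tfrac{(1+r^2)t}{1+r^2t^2},\pm\tfrac{r(1-t^2)}{1+r^2t^2}\bigr)$, the only cosmetic difference being that you extract it from $E_2$ via Remark~\ref{rem:formulas} while the paper's Lemma~\ref{lem:envelope} runs the envelope algorithm for $E_3$ (the two give the same set here, and Theorem~\ref{thm:circle} accommodates either). Where you genuinely diverge is the reverse inclusion. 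The paper argues by contradiction: if some $p\in\mathcal{A}_1$ were an interior point of $\Omega$, sliding from $p$ along the outward normal to $\partial\mathcal{D}_1$ a distance $\tilde s\ge\epsilon$ until first leaving $\Omega$ would produce a point of $\partial\Omega$ off $\mathcal{A}_1\cup\mathcal{A}_2$, contradicting the forward inclusion. You instead prove the sharper quantitative fact that each $\mathcal{S}_t$ is internally tangent to both $\partial\mathcal{D}_1$ and $\partial\mathcal{D}_2$ (your distance computation reducing to $R_r-r(t)$ is correct), so that $\Omega$ is trapped in the open lens $\mathcal{D}_1\cap\mathcal{D}_2$ and every envelope point lies in $\overline{\Omega}\setminus\Omega=\partial\Omega$. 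Your route buys a cleaner conclusion --- it replaces the paper's terse appeal to ``the simple geometry of $\mathcal{A}_1\cup\mathcal{A}_2$'' with an explicit containment $\Omega\subseteq\mathcal{D}_1\cap\mathcal{D}_2$, and it mirrors the tangency argument the paper itself uses in the harder horocycle case of Theorem~\ref{thm:bdy2} --- at the cost of one extra identity, $4r^2t^2+(1-r^2t^2)^2=(1+r^2t^2)^2$, which does collapse as you claim.
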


:
\begin{figure}[H]
	\centering
	\includegraphics[width=8cm]{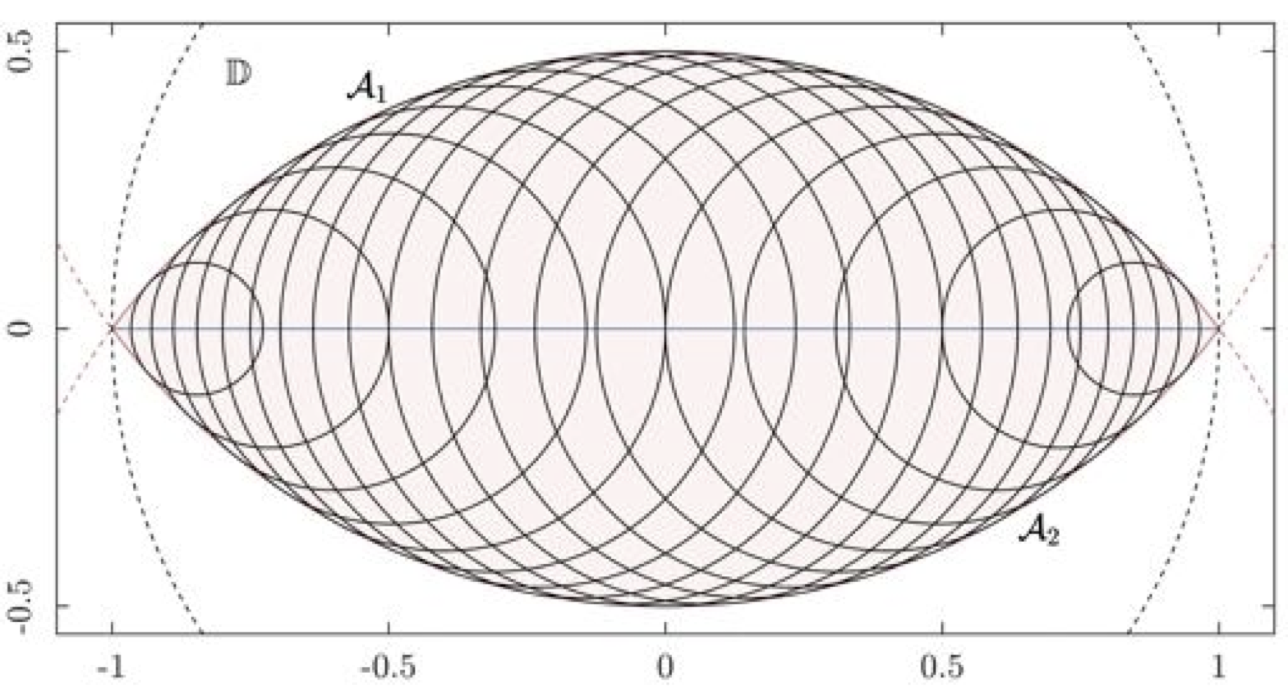}
\caption{\small The family $\{ D_\rho(t,r)\}_ {t \in [-1,1]}$ with boundary $\mathcal{A}_1 \cup \mathcal{A}_2$. }
\end{figure}

To prove Theorem \ref{thm:bdy}, we will use Theorem \ref{thm:circle}. This requires the following lemma: 

\begin{lemma} \label{lem:envelope} The discriminant envelope $E_3$ of the family of circles $\{ \mathcal{S}_t\}_{t\in[-1,1]}$ is $\mathcal{A}_1 \cup\mathcal{A}_2$.
\end{lemma}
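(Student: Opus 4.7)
The plan is to apply the envelope algorithm directly, following the template used in the proof of Lemma~\ref{lemma:family}. Writing the family \eqref{[eqn]:S_t} as $F(x,y,t) = (x - a(t))^2 + y^2 - \rho(t)^2$ with
\[
a(t) = \frac{(1-r^2)t}{1-r^2 t^2}, \qquad \rho(t) = \frac{r(1-t^2)}{1-r^2 t^2},
\]
I would first compute $a'(t)$ and $\rho'(t)$ (both carry a common factor $\tfrac{1-r^2}{(1-r^2 t^2)^2}$), then use $F_t(x,y,t) = 0$---which reduces to $(x-a(t))a'(t) + \rho(t)\rho'(t) = 0$---to solve explicitly for $x - a(t)$ in terms of $t$, and substitute back into $F(x,y,t) = 0$ to solve for $y$. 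After simplification I expect the discriminant envelope to be parameterized by
\[
(x(t), y(t)) = \left(\frac{(1+r^2)\, t}{1+r^2 t^2}, \; \pm\frac{r(1-t^2)}{1+r^2 t^2}\right), \qquad t \in [-1,1].
\]

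To conclude the lemma I would then verify two things. First, direct substitution should show that $(x(t), y(t))$ satisfies $x^2 + y^2 + \tfrac{1-r^2}{r}\,y = 1$ when $y \ge 0$ and $x^2 + y^2 - \tfrac{1-r^2}{r}\,y = 1$ when $y \le 0$; these are precisely the expanded equations for the circles $\partial \mathcal{D}_1$ and $\partial \mathcal{D}_2$. Second, since
\[
x'(t) = \frac{(1+r^2)(1-r^2 t^2)}{(1+r^2 t^2)^2} > 0 \quad \text{for } t \in [-1,1],
\]
the map $x(t)$ is strictly increasing from $x(-1)=-1$ to $x(1)=1$, and $y(t) = 0$ at both endpoints with $|y(0)| = r$. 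A connectivity argument then shows that the $+$-branch traces out all of $\mathcal{A}_1$ and the $-$-branch traces out all of $\mathcal{A}_2$.

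The main obstacle is essentially bookkeeping: the key cancellation, in which the ungainly denominator $1 - r^2 t^2$ (appearing throughout the description of $\mathcal{S}_t$) collapses into the cleaner $1 + r^2 t^2$ featured in the envelope parameterization, requires care when combining $a(t)$ with $x - a(t)$ over a common denominator. An alternative, more geometric route would be to show directly that each $\mathcal{S}_t$ is internally tangent to both $\partial \mathcal{D}_1$ and $\partial \mathcal{D}_2$ by checking that the distance between the centers of $\mathcal{S}_t$ and $\partial \mathcal{D}_j$ equals $R_r - \rho(t)$, and then identify the common tangency points with the envelope; while elegant, this would require invoking a general tangency/envelope principle, so the direct envelope algorithm---which matches the style already used in Section~\ref{sec:ERT2}---seems preferable.
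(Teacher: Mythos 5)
Your proposal is correct and follows essentially the same route as the paper: apply the envelope algorithm to $F(x,y,t)$, obtain the parameterization $\bigl(\tfrac{(1+r^2)t}{1+r^2t^2}, \pm\tfrac{r(1-t^2)}{1+r^2t^2}\bigr)$, verify these points lie on $\partial\mathcal{D}_1$ (resp.\ $\partial\mathcal{D}_2$) according to the sign of $y$, and use the endpoints $(\pm1,0)$ together with continuity to see that each branch sweeps out the whole arc. Your additional observation that $x(t)$ is strictly increasing is a harmless strengthening of the paper's continuity argument (note only that $x'(\pm1)=0$, so the inequality is strict only on the open interval).
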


\begin{proof} Fix $r \in (0,1)$ and rewrite the circle equation from \eqref{[eqn]:S_t} as
$$
F(x,y,t)=\Bigg(x - \frac{1-r^2}{1-r^2t^2}t\Bigg)^2 + y^2 - \Bigg(\frac{r(1-t^2)}{1-r^2t^2}\Bigg)^2 = 0.
$$
Now apply the envelope algorithm. Differentiating with respect to $t$, setting $F_t(x,y,t)=0$, and requiring that $(x, y)$ also satisfy $F(x,y,t)=0$ yields the points
\[
x(t) = \frac{(r^2+1)t}{1+r^2t^2} \ \ \text{ and } \ \ y(t) = \pm \frac{r(1-t^2)}{1+r^2t^2},
\]
for $t\in[-1,1].$ This set of points is precisely the discriminant envelope $E_3.$  Define 
\begin{equation}
 \label{eqn:env1} 
E_3^+ : =  E_3 \cap \overline{\mathbb{H}^+} = \left\{ (x(t), y(t)) =\left( \frac{(r^2+1)t}{1+r^2t^2}, \frac{r(1-t^2)}{1+r^2t^2}\right) :  t\in [-1,1]\right\}. \end{equation}
We show that $E_3^+$  equals the curve $\mathcal{A}_1$. 
To this end, assume $(x(t),y(t)) \in E_3^+$. Then $y(t)\geq 0$ and 
$$ 
x(t)^2 + \Big(y(t) + \tfrac{1-r^2}{2r}\Big)^2 = \tfrac{(1+r^2)^2}{4r^2},
$$
so $(x(t),y(t)) \in \overline{\mathbb{H}^+}\cap \partial \mathcal{D}_1$ and $E_3^+ \subseteq \mathcal{A}_1$. Further, 
at $t=\pm1$, the formulas in  \eqref{eqn:env1} give the two endpoints of $\mathcal{A}_1$: $(\pm 1,0).$ As the formulas in
 \eqref{eqn:env1} are also continuous in $t$, they must map out the entire curve $\mathcal{A}_1$.

To complete the proof, define $E_3^-=E_3 \cap \overline{\mathbb{H}^-}$ and use identical arguments to show that $E_3^- = \mathcal{A}_2.$ Thus the discriminant envelope $E_3= \mathcal{A}_1 \cup \mathcal{A}_2$, as needed.
\end{proof}

We turn to the proof of Theorem \ref{thm:bdy}.

\begin{proof}[Proof of Theorem \ref{thm:bdy}.] Let $\Omega =  \cup_{t\in[-1,1]} D_\rho(t,r)$. We first show that the family of circles $\{\mathcal{S}_t\}_{t\in[-1,1]}$ satisfies the conditions of Theorem \ref{thm:circle}. In particular, note that 
\[ x_c(t) = \frac{1-r^2}{1-r^2t^2}t, \ \ \ y_c(t) = 0, \ \ \  r(t) = \frac{r(1-t^2)}{1-r^2t^2}. \]
These functions are in $C^2([-1,1])$ and for $t\in (-1,1)$, the radius $r(t) >0.$ Moreover, as
\[ x_c'(t) = \frac{(1-r^2)(r^2t^2+1)}{(1-r^2t^2)^2} \ \ \text{ and } \ \ r'(t) = \frac{2rt(r^2-1)}{(1-r^2t^2)^2},\]
one can easily compute
\[ x_c'(t)^2 +y'_c(t)^2 - r'(t)^2= \frac{(1-r^2)^2}{(1-r^2t^2)^2}>0.\]
Thus $\{\mathcal{S}_t\}_{t\in[-1,1]}$ satisfies the conditions of Theorem \ref{thm:circle}, so Lemma \ref{lem:envelope} implies that 
\[\partial \Omega \subseteq \mathcal{A}_1 \cup\mathcal{A}_2\cup S_{-1} \cup S_1 = \mathcal{A}_1 \cup \mathcal{A}_2,\]
where we used the fact that $\mathcal{S}_{-1} = (-1,0)$ and $\mathcal{S}_1 = (1,0)$ are in $\mathcal{A}_1$ and $\mathcal{A}_2.$

To show the other containment, recall that $\mathcal{A}_1 \subset \overline{\mathbb{H}^+}$ and $\mathcal{A}_2\subset \overline{\mathbb{H}^-}$ are arcs of circles that intersect at $(\pm 1,0)$. 
Without loss of generality, assume some $p \in \mathcal{A}_1$ is not in $\partial \Omega.$ By Lemma \ref{lem:envelope}, $p \in \mathcal{S}_t$ for some $t$.
This implies that there is some open disk $B_{\epsilon}(p)$, with center $p$ and radius $\epsilon >0,$ that is contained in $\Omega.$ Let $(v_1, v_2)$ denote a vector normal to $\mathcal{A}_1$ at $p$ pointing out of $\mathcal{D}_1$. 
Let $\tilde{s} = \sup\{ s\in \mathbb{R}: p+ s(v_1, v_2) \in \Omega\}.$ Then $\tilde{p} =  p+ \tilde{s}(v_1, v_2) \in \partial \Omega$. However as $\tilde{s} \ge \epsilon$, the simple geometry of $\mathcal{A}_1 \cup \mathcal{A}_2$ implies that $\tilde{p} \not \in \mathcal{A}_1 \cup \mathcal{A}_2$, a contradiction.
\end{proof}
As mentioned earlier, this result has applications to various aspects of function theory on the disk. These are detailed in the following remark.

\begin{remark}\label{rem:app1}Theorem \ref{thm:bdy} says that, in particular, the points in the union of pseudohyperbolic disks $\cup_{t\in[-1,1]} D_\rho(t,r)$ lie in a Stolz region, where a Stolz angle at a point $\lambda \in \mathbb{T}$ is defined as the set of points \[S(\lambda) = \{z \in \mathbb{D}: |1 - \overline{\xi}z| \le K (1 - |z|)\},\] where $K$ is a fixed constant greater than $1$. A sequence of points in such a region that converges to $\lambda$ is said to approach the boundary nontangentially. 

There are many important applications of this. For example, consider the following: Let $\mathcal{A}^p$ denote the Bergman space of all functions analytic on $\mathbb{D}$ such that 
\[\|f\| = \left(\int_\mathbb{D} |f(z)|^p dA(z)\right)^{1/p} < \infty,\] where the integral is with respect to area measure on $\mathbb{D}$. If the zeros of a Blaschke product $B$ all lie in a Stolz angle, then the derivative $B^\prime$ lies in $\mathcal{A}^p$ for $p < 3/2$, \cite{GPV07}. 

Interpolating sequences for the algebra $H^\infty$ of bounded analytic functions on $\mathbb{D}$ provide another example of the important role Stolz angles play. A sequence of points $(z_n)$ in $\mathbb{D}$ is an interpolating sequence for $H^\infty$ if for every bounded sequence $(w_n)$ of complex numbers, there is a function $f \in H^\infty$ such that $f(z_n) = w_n$ for all $n$. Carleson showed that a sequence is interpolating if and only if it is uniformly separated; that is,
\[\inf_k \prod_{j: j \ne k} \left| \frac{z_j - z_k}{1 - \overline{z_j} z_k}\right| \ge \delta>0.\]  A sequence $(z_n)$ in $\mathbb{D}$ is said to be an exponential sequence if there is a constant $c < 1$ such that for all $n > 1$
\[\frac{1 - |z_n|}{1 - |z_{n-1}|} < c.\] This is a strong condition on a sequence: Hayman and D. J. Newman \cite[p. 203]{Ho} both showed that an exponential sequence is interpolating. In \cite{N59} Newman notes that a uniformly separated sequence of points in a Stolz angle is a finite union of exponential sequences. This result is extended in \cite{GPV08}. In this setting, these results provide an alternative to determining whether a sequence is interpolating from the uniform separation constant: We say that a sequence $(z_n)$ is separated if 
\[\inf_n \left|\frac{z_n - z_{n+1}}{1 - \overline{z_n}z_{n+1}}\right| \ge \delta > 0;\] in other words, there is a minimal positive distance between successive points in the pseudohyperbolic mentric. To check that a sequence in the union of pseudohyperbolic disks  is interpolating, we need only check the separation condition -- a much easier condition to check.
\end{remark}

\section{Pseudohyperbolic Disks on a Horocycle} \label{sec:horocycle}

In this section, we fix $r\in (0,1), k>0$ and consider the family of pseudohyperbolic disks $\{D_\rho(\alpha,r): \alpha \in H(1, k)\},$
whose centers lie along the horocycle $H(1,k)$ from \eqref{eqn:horo}.  Recall the open disks
\[ \mathscr{D}_1= \left\{z \in \mathbb{C} :\left |z- c_1\right| < R_1 \right\} \ \ \text{ and }  \ \ \mathscr{D}_2=  \{z \in \mathbb{C} :\left|z-c_2\right| < R_2 \} ,
\] from \eqref{eqn:disks2}, where the $c_j, R_j$ are from \eqref{eqn:c1c2}.

Observe that as $(c_1-c_2)^2 - (R_1-R_2)^2=0$, Remark \ref{rem:intersection} implies that $\partial \mathscr{D}_1$ and $\partial \mathscr{D}_2$ are tangent to one another. The point of tangency is $(1,0)$ and as $R_2<R_1$, one can show that $c_2 \in \mathscr{D}_1$ and conclude that $\mathscr{D}_2 \subseteq \mathscr{D}_1.$
We will prove:

\begin{theorem} \label{thm:bdy2} The union $\bigcup_{\alpha\in H(1, k)} D_\rho(\alpha,r)$ has boundary $\partial \mathscr{D}_1 \cup \partial \mathscr{D}_2$.   \end{theorem}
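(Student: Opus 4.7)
The plan is to realize the family as one-parameter Euclidean circles and apply Theorem~\ref{thm:circle} together with Remark~\ref{rem:formulas}. I would parametrize the horocycle by $\alpha(t) = \tfrac{1}{k+1}(1 + ke^{it})$ for $t \in [0, 2\pi]$, noting that $\alpha(0) = \alpha(2\pi) = 1$ corresponds to the degenerate pseudohyperbolic disk $\{1\}$. From \eqref{eqn:cR}, extract the Euclidean center $(x_c(t), y_c(t)) := c_\rho(\alpha(t))$ and radius $r(t) := R_\rho(\alpha(t))$; because the denominator satisfies $1 - r^2|\alpha(t)|^2 \ge 1 - r^2 > 0$, these are $C^\infty$ on all of $[0, 2\pi]$, with $r(t) > 0$ on the open interval $(0, 2\pi)$.

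Next I would verify the non-nesting condition $r'(t)^2 < x_c'(t)^2 + y_c'(t)^2$ on $(0, 2\pi)$ required by Theorem~\ref{thm:circle}. Geometrically, two pseudohyperbolic disks of the same pseudohyperbolic radius $r$ with distinct but nearby centers cannot be Euclidean-nested: they must overlap in a lens, so their Euclidean boundary circles meet transversally. Analytically, a direct differentiation of $c_\rho(\alpha(t))$ and $R_\rho(\alpha(t))$ and simplification confirms the strict inequality.

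The central step is to invoke Remark~\ref{rem:formulas} and identify the two branches $(x_j(t), y_j(t))$, $j=1,2$, of the limiting-position envelope $E_2$. The goal is to verify
\[ (x_1(t) - c_1)^2 + y_1(t)^2 = R_1^2 \quad \text{and} \quad (x_2(t) - c_2)^2 + y_2(t)^2 = R_2^2, \]
with $c_j, R_j$ as in \eqref{eqn:c1c2}. Rationalizing with the Weierstrass substitution $s = \tan(t/2)$ is a natural way to tame the resulting algebra. Once these identities are established, continuity in $t$ combined with evaluation at convenient values — for instance $t = \pi$ (giving the leftmost horocycle point) and the degenerate endpoints $t = 0, 2\pi$ (both yielding the tangency point $(1,0) \in \partial \mathscr{D}_1 \cap \partial \mathscr{D}_2$) — shows that each branch sweeps out all of $\partial \mathscr{D}_j$.

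Finally, Theorem~\ref{thm:circle} gives $\partial \Omega \subseteq E_2 \cup \mathcal{C}_0 \cup \mathcal{C}_{2\pi}$, and because the endpoint circles degenerate to $\{(1,0)\} \subseteq \partial \mathscr{D}_1 \cap \partial \mathscr{D}_2$, one concludes $\partial \Omega \subseteq \partial \mathscr{D}_1 \cup \partial \mathscr{D}_2$. For the reverse inclusion, each envelope point is a limit of intersection points of nearby $\mathcal{C}_t$ and hence lies in $\overline{\Omega}$; a normal-direction argument mirroring the closing of the proof of Theorem~\ref{thm:bdy} then rules out any of these envelope points being interior to $\Omega$. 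The principal obstacle is precisely this envelope-identification step: because $x_c, y_c, r$ are rational in $\cos t, \sin t$, the formulas of Remark~\ref{rem:formulas} expand into lengthy expressions involving a square root, and recognizing them as pinning down the particular circles $\partial \mathscr{D}_j$ demands careful algebraic manipulation — which is exactly why the authors signal that the discriminant algorithm cannot be applied directly as in the linear-centers case.
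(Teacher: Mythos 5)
Your forward inclusion follows the paper's route: realize the family as a one-parameter family of Euclidean circles, check the hypotheses of Theorem~\ref{thm:circle}, and use Remark~\ref{rem:formulas} to identify the two branches of $E_2$ with $\partial \mathscr{D}_1$ and $\partial \mathscr{D}_2$. (The paper inserts one extra step -- it first shows the Euclidean centers sweep out an ellipse and reparameterizes by the ellipse angle, which makes the radius formula $\tfrac{R_1-R_2}{2}(1-\cos t)$ and the subsequent algebra much cleaner than working directly with the horocycle angle; your direct parameterization is legitimate but will cost you more computation. You should also actually carry out the verification of $r'(t)^2 < x_c'(t)^2+y_c'(t)^2$ rather than assert it; in the paper's parameterization the difference is identically $R_1R_2$.)

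The genuine gap is in the reverse inclusion, specifically for $\partial \mathscr{D}_2$. The normal-direction argument from the end of the proof of Theorem~\ref{thm:bdy} shows that a point of the \emph{outer} boundary curve cannot be interior to $\Omega$: pushing outward along the normal from such a point produces a boundary point of $\Omega$ lying off the claimed boundary set, a contradiction. That works verbatim for $\partial \mathscr{D}_1$. It does \emph{not} work for $\partial \mathscr{D}_2$, because $\partial\mathscr{D}_2$ sits inside $\mathscr{D}_1$: pushing outward from $p \in \partial\mathscr{D}_2$ (away from $c_2$) moves you deeper into the region already known to be covered, and pushing inward (toward $c_2$) traverses $\mathscr{D}_2$ and lands either on the antipodal point of $\partial\mathscr{D}_2$ or on $\partial\mathscr{D}_1$ -- both of which lie in $\partial\mathscr{D}_1\cup\partial\mathscr{D}_2$, so no contradiction arises. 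Indeed, if the disks $\mathcal{D}_t$ happened to cover $\mathscr{D}_2$, the boundary of $\Omega$ would be just $\partial\mathscr{D}_1$ and the forward inclusion would still hold; so some argument must show that $\mathscr{D}_2$ is genuinely a hole, i.e.\ $\mathscr{D}_2 \cap \Omega = \emptyset$. The paper does this by computing
\[
(x_c(t)-c_2)^2 + y_c(t)^2 = (R_2 + r(t))^2,
\]
so that each $\mathcal{S}_t$ is tangent to $\partial\mathscr{D}_2$, and then using the location of the tangency point $(1,0)$ and $\mathscr{D}_2\subseteq\mathscr{D}_1$ to conclude the tangency is external, whence no $\mathcal{D}_t$ meets $\mathscr{D}_2$. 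Only after that does a contradiction argument (a ball $B_\epsilon(p)\subseteq\Omega$ would have to meet $\mathscr{D}_2$) finish the job. Your sketch omits this entirely, and it is the one step of the reverse inclusion that does not mirror Theorem~\ref{thm:bdy}.
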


\begin{figure}[H]
	\centering
	\includegraphics[width=4cm]{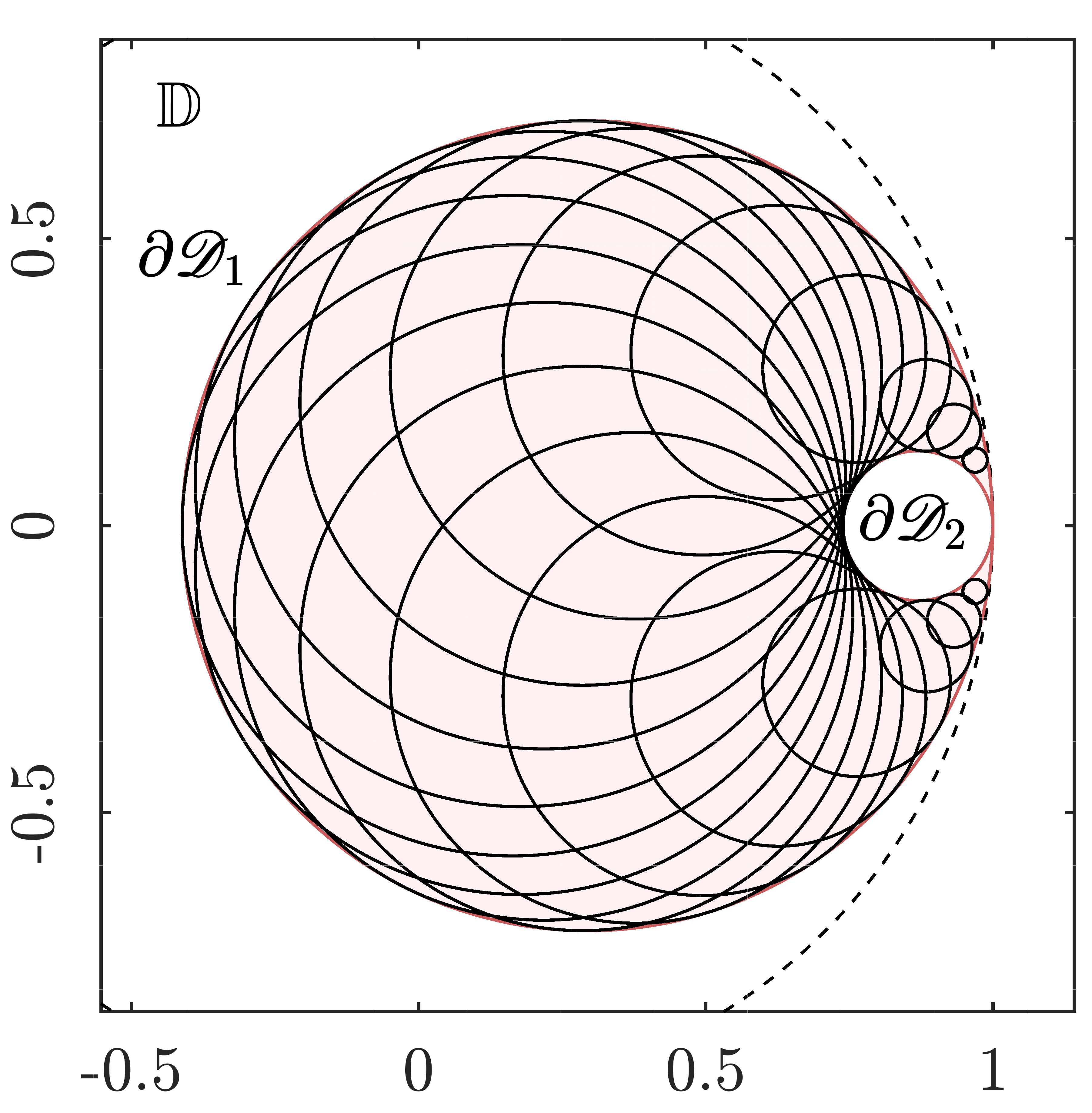}
\caption{\small The family $\{ D_\rho(\alpha,r)\}_ {\alpha \in H(1,k)}$ with boundary $\partial \mathscr{D}_1 \cup \partial \mathscr{D}_2$. }
\end{figure}

Define $a = \frac{1}{2}(R_1 + R_2)$, $b = \sqrt{R_1 R_2}$, and $c = \frac{1}{2}(c_1+c_2)$. Let $\mathcal{E}$ denote the ellipse 
	\begin{equation}\label{eqn:E} \tfrac{(x-c)^2}{a^2} + \tfrac{y^2}{b^2} =1. \end{equation}
We first use these constants to rewrite the family of pseudohyperbolic circles as a family of Euclidean circles $\{\mathcal{S}_t\}_{t\in[0, 2\pi]}$ whose centers lie along $\mathcal{E}.$ 


\begin{theorem} \label{thm:euclidean} The set of Euclidean centers $\left\{c_\rho(\alpha): \alpha \in H(1,k) \right\}$ equals the ellipse $\mathcal{E}$ in \eqref{eqn:E}. The family of pseudohyperbolic circles $\{\partial D_\rho(\alpha,r)\}_{\alpha \in H(1, k)}$ equals the family of Euclidean circles $\{\mathcal{S}_t\}_{t\in[0, 2\pi]}$, where each $\mathcal{S}_t$ is defined by 
  \begin{equation} \label{eqn:St}
\left(x - (c+a\cos t) \right)^2 + (y-b\sin t )^2 = 
\left(\tfrac{R_1-R_2}{2} \right)^2 (1-\cos t )^2.
\end{equation}
\end{theorem}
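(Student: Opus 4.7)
The approach is to parameterize the horocycle $H(1,k)$ directly. Since $H(1,k)$ is a Euclidean circle of radius $k/(k+1)$ centered at $(1/(k+1),0)$, I would write $\alpha(\phi) = (1 + ke^{i\phi})/(k+1)$ for $\phi \in [0, 2\pi]$. A short computation yields $1 - |\alpha|^2 = 2k(1-\cos\phi)/(k+1)^2$ and $1 - r^2|\alpha|^2 = N_0/(k+1)^2$, where $N_0 := M - 2r^2 k(1+\cos\phi)$ and $M := (1+k)^2 - r^2(1-k)^2$. Substituting into \eqref{eqn:cR} gives formulas for the Euclidean center $(x_c, y_c) = c_\rho(\alpha)$ and radius $R_\rho$ with common denominator $N_0$:
\[
x_c = \frac{(1-r^2)(k+1)(1+k\cos\phi)}{N_0}, \quad y_c = \frac{(1-r^2)(k+1)k\sin\phi}{N_0}, \quad R_\rho = \frac{2rk(1-\cos\phi)}{N_0}.
\]

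Next I would factor $M = AB$ with $A = (1-r)+k(1+r)$ and $B = (1+r)+k(1-r)$, so that $c$, $a$, $b$, and $\tfrac{1}{2}(R_1-R_2)$ from \eqref{eqn:c1c2} all simplify over the denominator $M$: namely $c = [(1-r^2)+k(1+r^2)]/M$, $a = k[(1+r^2)+k(1-r^2)]/M$, $b = k\sqrt{1-r^2}/\sqrt{M}$, $(R_1-R_2)/2 = 2rk/M$, together with the useful identity $c + a = 1$. Then I would define a new parameter $t(\phi) \in [0,2\pi]$ implicitly by $1 - \cos t = M(1-\cos\phi)/N_0$, requiring $\sin t$ to share the sign of $\sin\phi$. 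The inequality $M - 4r^2 k = (1+k)^2(1-r^2) \geq 0$ forces $1 - \cos t \leq 2$, making $t$ well-defined, and the half-angle identities then give $\sin^2(t/2) = M\sin^2(\phi/2)/N_0$ together with $\cos^2(t/2) = (1+k)^2(1-r^2)\cos^2(\phi/2)/N_0$.

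From these formulas I would verify the three target identities $y_c = b\sin t$, $x_c = c + a\cos t$, and $R_\rho = \tfrac{1}{2}(R_1-R_2)(1-\cos t)$. The third is immediate from the definition of $t$, and the first follows by computing $\sin t = 2\sin(t/2)\cos(t/2)$ and simplifying. The main obstacle is the second identity: it rests on the key algebraic identity $kQ + P = M$ with $P := (1-r^2)+k(1+r^2)$ and $Q := (1+r^2)+k(1-r^2)$, which forces the coefficient of $\cos\phi$ in $N_0(c + a\cos t)$ to collapse to $k(1-r^2)(1+k)$, matching $x_c$ exactly. Once these identities are in place, $\partial D_\rho(\alpha(\phi),r) = \mathcal{S}_{t(\phi)}$ from \eqref{eqn:St}. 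Since $\phi \mapsto t(\phi)$ is continuous with $t(0) = 0$, $t(\pi) = \pi$, and $\sin t$ tracking the sign of $\sin\phi$, the point $(c + a\cos t, b\sin t)$ traces $\mathcal{E}$ exactly once as $\phi$ runs over $[0, 2\pi]$, proving both claims of the theorem.
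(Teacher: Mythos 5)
Your proposal is correct, and all the key computations check out: $M=AB$ with $A=(1-r)+k(1+r)$, $B=(1+r)+k(1-r)$; the identity $P+kQ=M$; $M-4r^2k=(1-r^2)(1+k)^2>0$ so that $t(\phi)$ is well defined; and the collapse of the numerator of $N_0\left(c+a\cos t\right)$ to $(1-r^2)(1+k)(1+k\cos\phi)M/M$. The first half of your argument (parameterizing $H(1,k)$ by $\alpha(\phi)=(1+ke^{i\phi})/(k+1)$, computing $c_\rho$ over the common denominator $N_0$, and using the endpoint values and the sign of the imaginary part to get surjectivity onto $\mathcal{E}$) is essentially identical to the paper's. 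Where you genuinely diverge is in establishing the radius formula in \eqref{eqn:St}: you construct an explicit change of parameter $t(\phi)$ via $1-\cos t = M(1-\cos\phi)/N_0$ and verify all three identities $x_c=c+a\cos t$, $y_c=b\sin t$, $R_\rho=\tfrac{1}{2}(R_1-R_2)(1-\cos t)$ by direct computation. The paper instead argues indirectly: it starts from a center already written as $(c+a\cos t, b\sin t)$ on $\mathcal{E}$, observes from \eqref{eqn:cR} that $|c_\rho|$ determines $|\alpha|$ and that $R\mapsto|\beta|$ is a bijection, and derives a contradiction from assuming $\tilde{R}\ne R_\rho$, thereby never having to exhibit $t$ as a function of $\phi$. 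Your route costs more algebra but is more transparent and yields the explicit correspondence $\phi\mapsto t(\phi)$ between the horocycle parameter and the ellipse parameter, which makes the surjectivity of the family $\{\mathcal{S}_t\}_{t\in[0,2\pi]}$ onto $\{\partial D_\rho(\alpha,r)\}$ an immediate intermediate-value argument; the paper's route avoids that algebra at the price of a less constructive uniqueness argument.
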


\begin{figure}[H]
	\centering
	\includegraphics[width=4cm]{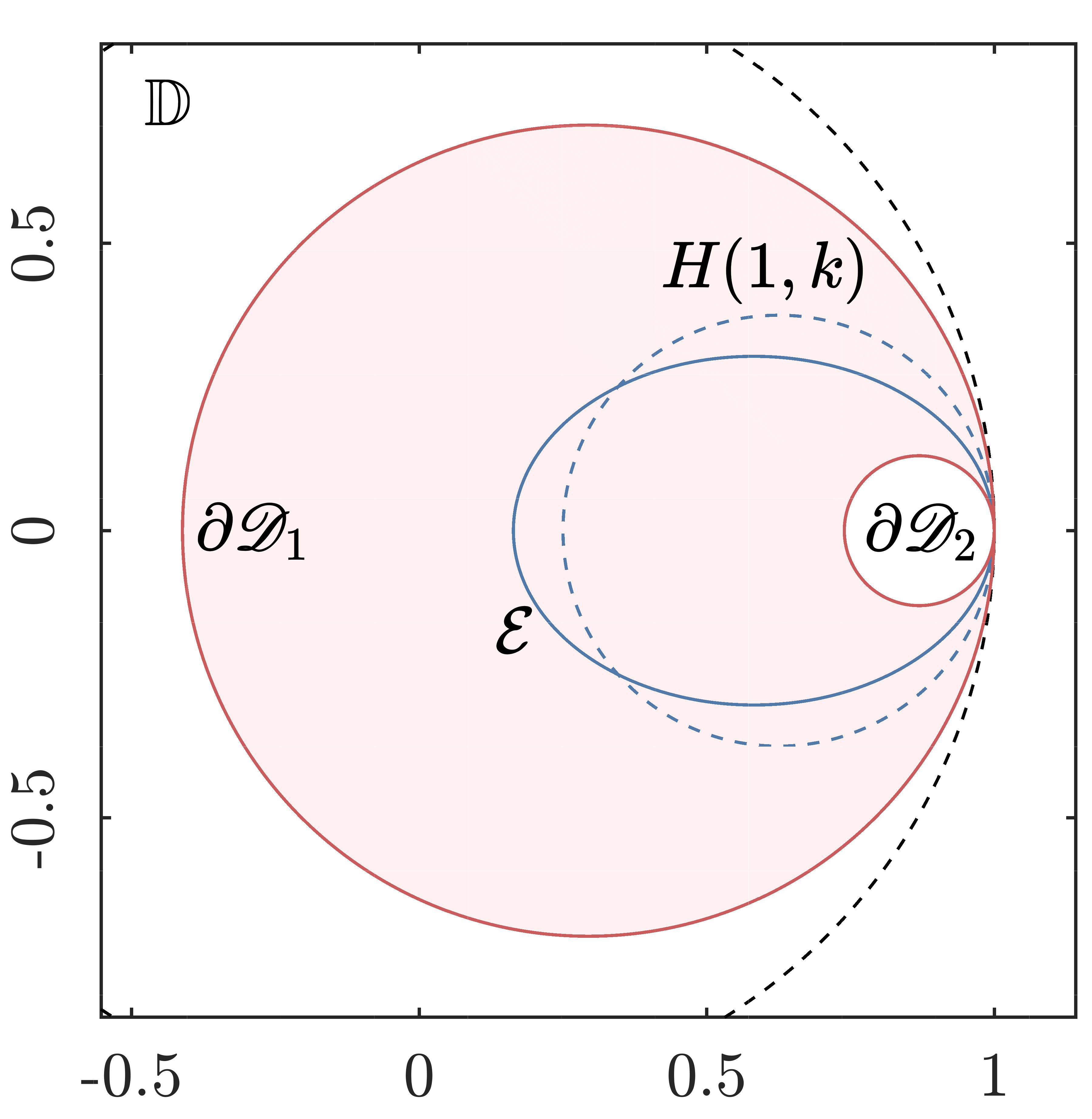}
\caption{\small The ellipse $\mathcal{E}$ of Euclidean centers with $H(1,k)$ and  $\partial \mathscr{D}_1 \cup \partial \mathscr{D}_2$. }
\end{figure}

 \begin{proof}
 	First we show that $\left\{c_\rho(\alpha): \alpha \in H(1,k) \right\} \subseteq \mathcal{E}$, where $\mathcal{E}$ is the ellipse defined in \eqref{eqn:E}.
	For each $\alpha  \in H(1,k)$, there is a $\gamma \in [0, 2\pi)$ so that $\alpha = \frac{1}{k+1} + \frac{k}{k+1} e^{i\gamma}.$ Using this in  
 	\eqref{eqn:cR} gives 
 	\begin{equation*}
	c_\rho(\alpha)  = x_{c_\rho}(\gamma)+i y_{c_\rho}(\gamma)= \frac{(1-r^2)(k+1)(1 + ke^{i \gamma})}{(1+k^2)(1-r^2)+2k(1-r^2\cos \gamma) }.\end{equation*}
A computation shows that each $ (x_{c_\rho}(\gamma), y_{c_\rho}(\gamma))$ satisfies \eqref{eqn:E} 
 	and therefore each $c_\rho(\alpha) \in \mathcal{E}$. To see the other containment, note that $\gamma=0, 2\pi$ both yield the points $(1,0)$ and $\gamma =\pi$ yields the other point on $\mathcal{E} \cap \mathbb{R}.$ As 	
	$y_{c_\rho}(\gamma)> 0$ for all $\gamma \in (0,\pi)$ and $y_{c_\rho}(\gamma)< 0$ for all $\gamma \in (\pi,2\pi)$, continuity implies that $(x_{c_\rho}(\gamma), y_{c_\rho}(\gamma))$ maps out $\mathcal{E}$.

	Now fix a pseudohyperbolic circle $\partial D_\rho(\alpha,r)$ with $\alpha \in H(1,k)$. Then its Euclidean center is on $\mathcal{E}$ and so can be written as 
	\[ c_{\rho} = (c +a\cos t, b \sin t)\]
	for some $t \in [0, 2 \pi]$, and each such $t$ yields the center of some $\partial D_\rho(\alpha,r)$. We claim that the Euclidean radius $R_{\rho}$ of this $\partial D_\rho(\alpha,r)$ is given by 
	\[\tilde{R}:=\tfrac{R_1-R_2}{2} (1-\cos t).\]
	First, note that \eqref{eqn:cR} implies that the moduli $|\alpha|$ is determined by $|c_{\rho}|$, i.e. $|\alpha|$ is the unique nonnegative number satisfying 
	\[ |c_\rho| = \frac{(1-r^2) |\alpha|}{1-r^2|\alpha|^2}.\]
	 Similarly,  \eqref{eqn:cR}  gives a one-to-one correspondence between $R$ satisfying $0<R\le r$ and $|\beta|^2$ with $0\le |\beta| <1.$
	By way of contradiction, assume that $\tilde{R} \ne R_{\rho}$. As $0<\tilde{R}\le r$, \eqref{eqn:cR} gives a unique associated 
	$|\beta_{\tilde{R}}|^2 = \frac{r-\tilde{R}}{r(1-r\tilde{R})}.$ As $\tilde{R} \ne R_{\rho}$, we must have $|\beta_{\tilde{R}}| \ne |\alpha|$ and so 
	\[ |c_\rho| \ne \frac{(1-r^2) |\beta_{\tilde{R}}|}{1-r^2 |\beta_{\tilde{R}}|^2}.\]
	This gives the contradiction, because a straightforward computation shows that, using the definition of $\tilde{R}$, $|c_{\rho}|$ equals the above quantity. Thus, $R_\rho = \tfrac{R_1-R_2}{2} (1-\cos t)$, as needed. 	\end{proof}

We turn to the proof of Theorem \ref{thm:bdy2}.  In particular, we apply Theorem \ref{thm:circle} and use Remark \ref{rem:formulas} to obtain the points that the limiting-position envelope can contribute to the boundary.

\begin{proof}[Proof of Theorem \ref{thm:bdy2}.] Set $\Omega = \cup_{\alpha\in H(1, k)} D_\rho(\alpha,r).$ By Theorem \ref{thm:euclidean}, this set is also the union of the open Euclidean disks with boundaries $\mathcal{S}_t$, for $t\in[0,2\pi]$.
To see that $\{\mathcal{S}_{t}\}_{t \in[0, 2\pi]}$ satisfies the conditions of Theorem \ref{thm:circle}, observe that
\[ 
x_c(t) = c+a\cos t, \ \ \ y_c(t) = b\sin t,  \ \ \ r(t) = \tfrac{R_1-R_2}{2} (1-\cos t).\]
These functions are in $C^2([0,2\pi])$ and $r(t) >0$ for $t\in (0, 2\pi).$ Furthermore,
\[ x_c'(t)^2 +y_c'(t)^2 - r'(t)^2 = \frac{k^2(1-r^2)}{(1+r+k(1-r))(1-r+k(1+r))} = R_1 R_2>0,\]
so the derivative condition is satisfied. Theorem \ref{thm:circle} implies that 
\[ \partial \Omega \subseteq E_2 \cup \mathcal{S}_0 \cup \mathcal{S}_{2\pi}.\]
The formulas in Remark \ref{rem:formulas} show that the portion of $E_2 \setminus \{\mathcal{S}_0 \cup \mathcal{S}_{2\pi}\}$ that can contribute to $\partial \Omega$ is comprised of the following two curves $(x_1, y_1)$, $(x_2, y_2)$ parameterized by $t$ and 
defined as follows:
\[
\begin{aligned}
x_j(t) &= c +a\cos t +\frac{(R_1-R_2)(1-\cos t )}{2(a^2\sin^2t+ b^2\cos^2t)}\left(a \tfrac{R_1-R_2}{2}  \sin^2t   +(-1)^{j+1} b^2\cos t \right),\\
 y_j(t) &= b \sin t  +\frac{(R_1-R_2)(1-\cos t )}{2(a^2\sin^2t+ b^2\cos^2t)}\left( -b\tfrac{R_1-R_2}{2} \cos t \sin t  +(-1)^{j+1} ab \sin t \right), 
\end{aligned}
\]
 for $t\in (0, 2\pi).$ 
These simplify to
\[ 
\begin{aligned}
x_1(t) &= \frac{1+r^2+(k+2r-kr^2)\cos t}{1+k+r^2-kr^2+2r\cos t} \ \  \text{ and } \ \ y_1(t) = \frac{(1+r+k-kr)(1+r)}{1+k+r^2-kr^2+2r\cos t} b\sin t,\\
x_2(t) &=\frac{1+r^2+(k-2r-kr^2)\cos t}{1+k+r^2-kr^2-2r\cos t} \ \  \text{ and } \ \  y_2(t) = \frac{(1-r+ k+kr)(1-r)}{1+r^2+k-kr^2-2r\cos t }b\sin t. 
\end{aligned}
\]
The denominators never vanish because
\[ \left(1+k+r^2-kr^2\pm 2r\cos t\right) \ge \left(1+k+r^2-kr^2-2r\right) = (1-r)^2+k(1-r^2) >0,\]
and so $x_j(t), y_j(t)$ are well defined for all $t$. 

We claim that the image of each $(x_j, y_j)$ equals $\partial \mathscr{D}_j.$ First, a straightforward computation shows that each $(x_j(t), y_j(t)) \in \partial \mathscr{D}_j$. For the reverse containment, consider $(x_1(t), y_1(t))$. Observe that $t=0, 2\pi$ gives the point $(1,0)$ and $t=\pi$ gives the other point on $\partial \mathscr{D}_1 \cap \mathbb{R}$.  
When $t \in (0, \pi)$, we can see that $y_1(t)>0$ and when $t \in (\pi, 2\pi)$, $y_1(t) <0$.  By continuity, this implies that $(x_1(t), y_1(t))$ must map out the entire circle $\partial \mathscr{D}_1$. A similar argument shows that  $(x_2(t), y_2(t))$ maps out $\partial \mathscr{D}_2$. To identify $E_2$ with these formulas, we need to restrict to $t \in (0,2\pi)$, which only omits the point $(1,0)$.

Then Theorem \ref{thm:circle} implies that
\[\partial \Omega \subseteq \left( (\partial\mathscr{D}_1 \cup \partial \mathscr{D}_2) \setminus \{(1,0)\} \right) \cup \mathcal{S}_0 \cup \mathcal{S}_{2\pi} = \partial \mathscr{D}_1 \cup \partial \mathscr{D}_2,\]
as $\mathcal{S}_0 = \mathcal{S}_{2\pi} = (1,0).$

Now we need to show that $\partial \mathscr{D}_1 \cup \partial \mathscr{D}_2 \subseteq \partial \Omega.$ To that end, recall that $\mathscr{D}_2 \subseteq  \mathscr{D}_1.$ We first show
$\partial \mathscr{D}_1 \subseteq \partial \Omega.$ By way of contradiction, assume that there is some $p \in \partial \mathscr{D}_1 \cap \partial \Omega^c.$ As $p \in \mathcal{S}_t$ for some $t$,
this implies that there is some open disk $B_{\epsilon}(p)$ centered at $p$ that is contained in $\Omega$. Let $(v_1, v_2)$ denote a vector normal to $\partial \mathscr{D}_1$ at $p$ pointing out of $\mathscr{D}_1$. 
Let $\tilde{s} = \sup\{ s \in \mathbb{R}: p+ s(v_1, v_2) \in \Omega\}.$ Then $\tilde{p} =  p+ \tilde{s}(v_1, v_2) \in \partial \Omega$. However, as $\tilde{s} \ge \epsilon$, it is easy to see that $\tilde{p} \not \in \partial \mathscr{D}_1 \cup \partial \mathscr{D}_2$, a contradiction.

To show that $\partial \mathscr{D}_2 \subseteq \partial \Omega,$ first observe that for $t\in [0, 2\pi]$
\[ \begin{aligned}
(x_c(t) - c_1)^2 + y_c(t)^2 &= (R_1-r(t))^2, \\
(x_c(t) - c_2)^2 + y_c(t)^2 &= (R_2+r(t))^2,
\end{aligned}
\] 
so Remark \ref{rem:intersection} implies that each $\mathcal{S}_t$ is tangent to both $\partial \mathscr{D}_1$ and $\partial \mathscr{D}_2$. The one common point between $\partial \mathscr{D}_1$ and $\partial \mathscr{D}_2$ is $(1,0)$ and for $t\in (0, 2\pi)$, $(1,0)$ is 
 not in the disk with boundary $\mathcal{S}_t$. As $\mathscr{D}_2 \subseteq \mathscr{D}_1$, this means $\mathcal{S}_t$ must contain a point outside of $\overline{\mathscr{D}_2}.$ Thus $\mathcal{S}_t$ and $\partial \mathscr{D}_2$ must be externally tangent to each other for $t\in (0, 2\pi)$ and so, $\mathscr{D}_2 \cap \Omega = \emptyset.$ Given that, by way of contradiction, assume that there is some $p \in \partial \mathscr{D}_2 \cap \partial \Omega^c.$ As $p \in \mathcal{S}_t$ for some $t$,  there must be some open disk $B_{\epsilon}(p)$ centered at $p$ that is contained in $\Omega.$ As $B_{\epsilon}(p) \cap \mathscr{D}_2 \ne \emptyset,$ this gives the contradiction. 
\end{proof}

In this situation, we showed that the center of each $\mathcal{S}_t$ lies on the ellipse given in \eqref{eqn:E} with center $(c_1+c_2)/2$ and major axis $R_1 +R_2$. Then we deduced that $\mathcal{S}_t$ is tangent to both $\partial \mathscr{D}_1$ and $\partial \mathscr{D}_2.$ As discussed in the following remark, the converse is also true.

\begin{remark} Let $\mathscr{C}_1$ and $\mathscr{C}_2$ be circles with centers $d_1$ and $d_2$ and radii $r_1$ and $r_2.$ Assume that $\mathscr{C}_2$ is internally tangent to $\mathscr{C}_1.$ Let $\mathcal{S}$ be a circle with center $c$ and radius $r$ that is tangent to $\mathscr{C}_1$ at a point $\beta_1$ and $\mathscr{C}_2$ at a point $\beta_2$. 

\begin{figure}[H]
	\centering
	\includegraphics[width=6cm]{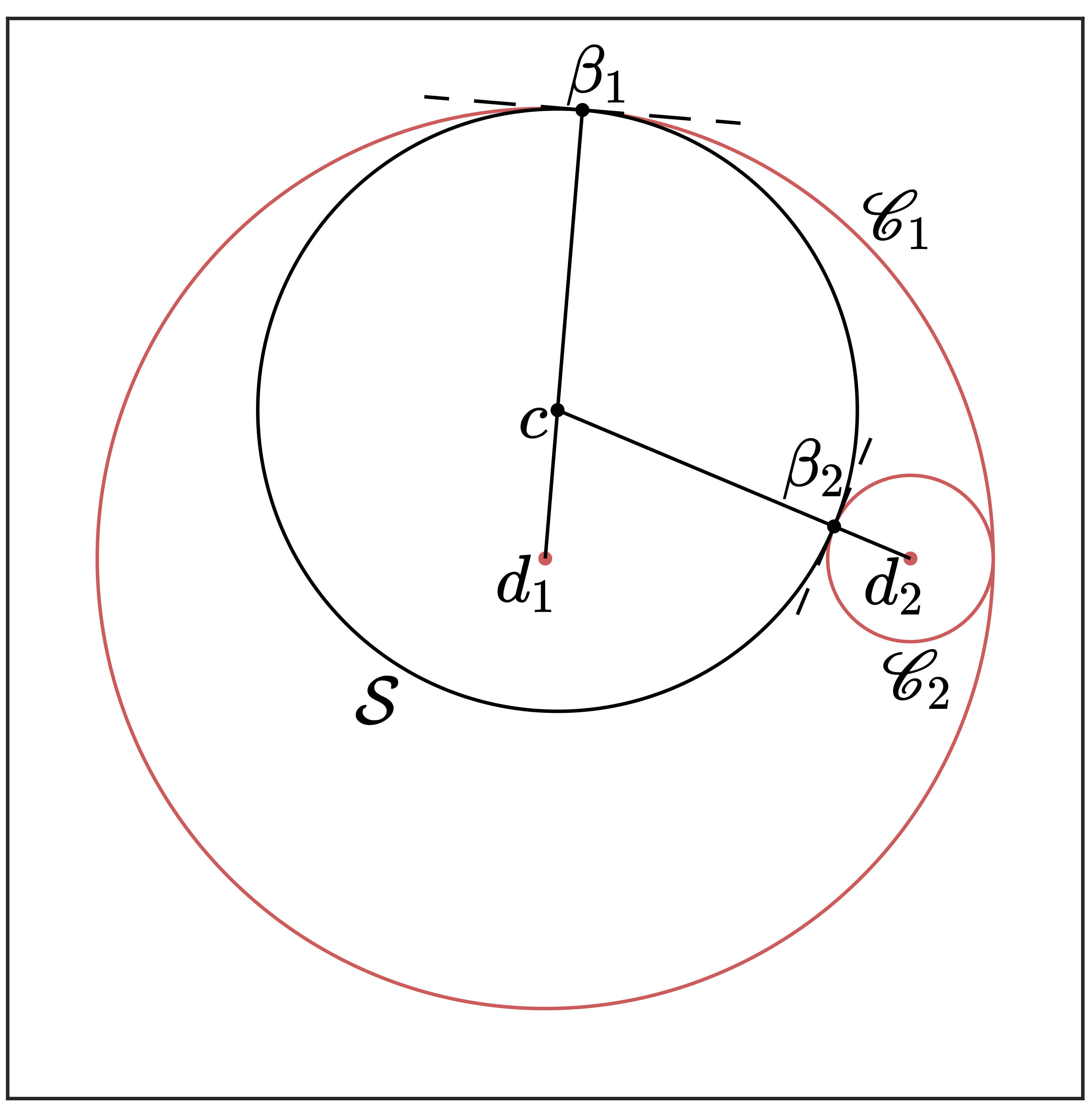}
\caption{\small Two tangent circles $\mathscr{C}_1$ and $\mathscr{C}_2$ with a mutually tangent circle $\mathcal{S}$. }
\end{figure}

We claim that $c$ must lie on the ellipse with center $(d_1 +d_2)/2$ and major axis $r_1 +r_2.$ 
To see this, observe that $\mathcal{S}$ must be internally tangent to $\mathscr{C}_1$ and externally tangent to $\mathscr{C}_2$. Moreover, the line from $\beta_1$ to $d_1$ must go through $c$ and the line from $c$ to $d_2$ must go through $\beta_2$. This gives
\[ |c - d_1| + |c - d_2 | = (r_1-r) + (r_2 +r) = r_1 +r_2.\]
 This means $c$ is on the ellipse with center $(d_1+d_2)/2$ and major axis $r_1+r_2,$ as needed.
\end{remark}

Finally, we observe that Theorem \ref{thm:bdy2} also has applications to interpolation:
\begin{remark} \label{rem:app2} Theorem \ref{thm:bdy2} is also of interest in the context of interpolation.  In \cite{W}, Wortman shows that given a convex curve $\Gamma$ in the closed unit disk with $\Gamma \cap \mathbb{T} = \{\lambda\}$ and $\Gamma$ tangent to $\mathbb{T}$ at $\lambda$, and a sequence $(z_n)$ on $\Gamma$ satisfying $\rho(z_n, z_{n+1}) = \delta > 0$ for all $n$, then $(z_n)$ is an interpolating sequence. Thus, choosing separated points along the boundary of the envelope $\partial \mathscr{D}_1 \cup \partial \mathscr{D}_2$ yields an interpolating sequence. We note that these results were generalized to a larger set of curves (called $K$-curves) by Max L. Weiss, \cite{W75}. Weiss also considers $M$-sequences; that is, sequences of the form $(z_n)$ where $z_n = r_n e^{i \theta_n}$ tends to $1$, $r_n$ is a strictly increasing sequence converging to $1$, $\theta_n$ is a strictly increasing sequence converging to $0$, and $\theta_n/(1-r_n) \to \infty$ as $n \to \infty$.  These curves also played an important role in the study of the maximal ideal space of $H^\infty$, \cite{B90}. In particular, the study of the envelopes of pseudohyperbolic disks is intimately connected with the study of interpolation.
\end{remark}

\end{document}